\theoremstyle{plain}
\newtheorem{thm}{Theorem}[section]
\newtheorem{theorem}[thm]{Theorem}
\newtheorem{lemma}[thm]{Lemma}
\newtheorem{proposition}[thm]{Proposition}
\theoremstyle{definition}
\newtheorem{remark}[thm]{Remark}
\newtheorem{question}[thm]{Question}
\newtheorem{problem}[thm]{Problem}
\numberwithin{equation}{section}
\title [primitive automorphisms of positive entropy]{Explicit examples of rational and Calabi-Yau threefolds with primitive automorphisms 
of positive entropy}
\author{Keiji Oguiso} 
 \author{Tuyen Trung Truong}
\address{Department of Mathematics, Osaka University, Toyonaka 560-0043, Osaka, Japan and Korea Institute for Advanced Study, Hoegiro 87, Seoul, 133-722, Korea}
 \email{oguiso@math.sci.osaka-u.ac.jp}   
       \address{Department of Mathematics, Syracuse University, Syracuse, NY 13244, USA}
 \email{tutruong@syr.edu}
 \dedicatory{Dedicated to Professor Fr\'ed\'eric Campana on the occasion of his sixtieth birthday.}
\thanks{The first author is supported by JSPS Grant-in-Aid (S) No 25220701,
JSPS Grant-in-Aid (S) No 22224001, JSPS Grant-in-Aid (B) No 22340009, and by KIAS Scholar Program.}
    \date{\today}
    \keywords{Rationality of manifolds, Rational threefold, Calabi-Yau threefold, Primitive Automorphism, Entropy, Dynamical Degrees.}
    \subjclass[2000]{14E08, 14J32, 14J50, 37F99.}
\begin{document}

\maketitle

\begin{abstract} We present the first explicit examples of a rational threefold and a Calabi-Yau threefold, admitting biregular automorphisms of positive entropy not preserving any dominant rational maps to  lower positive dimensional varieties. These examples are also the first whose dynamical degrees are not Salem numbers. Crucial parts are the rationality of the quotient threefold of a certain $3$-dimensional torus of product type and a numerical criterion of primitivity of birational automorphisms in terms of dynamical degrees. 
\end{abstract}

\section{Introduction}

Unless stated otherwise, we work in the category of projective varieties defined over the complex number field $\mathbf C$. Our main result is 
Theorem \ref{main} below. We first introduce some background for the study in this paper. 

\subsection{General Problem.} Complex dynamics of biholomorphic automorphisms of compact K\"ahler surfaces is now fairly well-understood since Cantat \cite{Ca99}. Especially, Bedford-Kim (\cite{bedford-kim1}, \cite{bedford-kim2}, \cite{bedford-kim3}) and McMullen (\cite{mcmullen}, \cite{mcmullen1}, \cite{mcmullen2}, \cite{mcmullen3}) show very beautiful aspects of automorphisms of rational surfaces and K3 surfaces. A general theory for the dynamics of biholomorphic automorphisms of higher dimensions were developed by Dinh and Sibony \cite{dinh-sibony3} \cite{dinh-sibony4}, however interesting examples are not yet found enough. Indeed, the following basic problem is completely open:

\begin{problem}\label{Explicit}
Find (many) examples of rational manifolds and Calabi-Yau manifolds admitting {\it primitive} biregular automorphisms of {\it positive entropy}. 
\end{problem}

\begin{remark}\label{PrimitiveEntropy}

(1) Here a dominant selfmap $f$ of a manifold $M$ is {\it imprimitive} if $f$ comes from lower dimensional manifolds, or more precisely, there are a dominant rational map $\varphi : M \cdots \to B$ with $0 <\dim\, B < \dim\, M$ and a dominant rational map $g : B \cdots \to B$ such that $\varphi \circ f = g \circ \varphi$. A dominant selfmap that is not imprimitive is {\it primitive}. This notion was introduced by Zhang \cite{Zh09}. In \cite{Zh09}, it is also proved that if a projective threefold $M$ admits a primitive birational automorphism of infinite order, then $M$ is birationally equivalent to either a $3$-dimensional complex torus, a weak Calabi-Yau threefold or a rationally connected threefold. Here a minimal threefold in the sense of the minimal model theory is called a weak Calabi-Yau threefold if the canonical divisor is numerically trivial and the irregularity is $0$. Needless to say, rational threefolds (resp. smooth Calabi-Yau threefolds) are the most basic examples of rationally connected threefolds (resp. weak Calabi-Yau threefolds). 

(2) {\it Entropy} is an important invariant that measures how fast two general points spread out under the action of $\langle f \rangle$. The original definition is a completely topological one, involving a metric $d(x,y)$ on the manifold and the induced metrics $d_n(x,y)=\max _{0\leq j\leq n}d(f^jx,f^jy)$ (See e.g. Bowen \cite{bowen} for the case of continuous maps, and for the case of rational maps see Friedland \cite{friedland} and Guedj \cite{guedj2}.)  When $f$ is a regular endomorphism of a compact K\"ahler manifold, a fundamental theorem of Gromov \cite{gromov} and Yomdin \cite{yomdin}  says that the entropy of $f$ is 
$$h_{top}(f)={\rm log}\, 
{\max}_{0 \le k \le {\rm dim}\, M} \lambda_k(f)\,\, ,$$
where $\lambda_k(f)$ is the $k$-th {\it dynamical degree}, i.e., the spectral radius of $f^* \vert H^{2k}(M, {\mathbf Z})$. In particular, if $f$ is of positive entropy, then $f$ is of infinite order. The theorem of Gromov and Yomdin was partially extended to the case of meromorphic maps in Dinh-Sibony \cite{dinh-sibony10} \cite{dinh-sibony1} (See also Section 3).

(3) If an automorphism of a compact K\"ahler manifold is imprimitive, then there are also some strong constraints on its dynamical degrees (Theorem \ref{Restriction}). This is an important point of our construction of primitive automorphisms. For example, in dimension $3$, its first and second dynamical degrees must be the same. In our construction in Theorem \ref{main} (see also Proposition \ref{Fibration}), the automorphisms have {\it different first and second dynamical degrees}, therefore it follows that they are primitive. Our theorem \ref{Restriction} is a simple but useful consequence of recent progress in complex dynamics (\cite{dinh-nguyen} and \cite{dinh-nguyen-truong}), which we shall review in Section 3. We also note that in dimension $3$, if the first dynamical degrees are either $1$ or a Salem number, then the first and second dynamical degrees of these examples must be the same (\cite{oguiso-truong}).
\end{remark}

The main aim of this note is to present first explicit examples of a rational threefold and a Calabi-Yau threefold in Problem \ref{Explicit}. 

\subsection{Constructing rational threefolds with rich automorphisms.} One of the essential parts of Theorem \ref{main} is the rationality of the obtained threefold with rich automorphisms. To explain this, we recall two basic constructions of manifolds from a given manifold $V$:

(I) Take a finite successive blow up $M$ of $V$ along smooth centers.

(II) Take a nice resolution $M$ of singularities of the quotient variety $V/G$ by a finite subgroup $G \subset {\rm Aut}\, (V)$. 

In construction (I), it is clear that $M$ is rational if $V = {\mathbf P}^3$ or ${\mathbf P}^2\times {\mathbf P}^1$ or ${\mathbf P}^1\times {\mathbf P}^1\times {\mathbf P}^1$, and in this way, there are constructed some explicit, interesting examples of {\it birational} automorphisms of rational threefolds of infinite orders (Bedford-Kim \cite{bedford-kim}\cite{bedford-kim4}, Perroni-Zhang \cite{perroni-zhang}, Blanc \cite{blanc}). However, they are either only pseudo-automorphisms (i.e., birational selfmaps isomorphic in codimension one) but not biregular, or imprimitive, or of null-entropy. Moreover, in all of these examples the first dynamical degrees are either $1$ or a Salem number, and hence the first and second dynamical degrees of these examples must be the same (compare with Remark \ref{PrimitiveEntropy} (3)).  

In fact, in dimension $\ge 3$, it is rather hard to construct {\it biregular} automorphisms following construction (I). For instance, the following very simple question by Professor Eric Bedford (at a conference in Paris in 2011), which was studied by the second author in \cite{truong}, still remains open: 
\begin{question}\label{truong}
Is there a smooth rational threefold $W$ obtained by a successive blow-up of ${\mathbf P}^3$ along smooth centers such that $W$ admits biregular automorphisms of positive entropy? 
\end{question}
The results in \cite{truong} suggest that the answer to Question \ref{truong} 
could be negative. So, at the moment, construction (I) may not be so promising to Problem \ref{Explicit}. 

In the second construction (II), $M$ has many biregular automorphisms if $V$ has many biregular automorphisms normalizing $G$. As we shall do in one specific case, we may have more chances to find primitive automorphisms of positive entropy among them, by using the method explained in Remark \ref{PrimitiveEntropy} (3). However, the rationality of $V/G$ is highly non-trivial in this construction (even if $V$ itself is rational!). There are now many 
general methods to see if $M$ is uniruled, rationally connected or not (\cite{MM86}, \cite{KMM92}, \cite{KL09}), and some useful methods to conclude $M$ is {\it not} rational (\cite{IM71}, \cite{CG72}, \cite{AM72}, \cite{Ko95}, \cite{Be12}). However, in dimension $\ge 3$, there is essentially no general method to conclude $M$ is rational or unirational, and this is in general very hard (\cite{Ko02}, see also the excellent survey \cite{kollar}). 

\subsection{Main Result.} 

Let 
$$\omega := \frac{-1 + \sqrt{-3}}{2}$$
and 
$$E_{\omega} := {\mathbf C}/({\mathbf Z} + {\mathbf Z}\omega)$$ 
be the elliptic curve of period $\omega$. The elliptic curve $E_{\omega}$ is a very special one that is characterized as the unique elliptic curve admitting an automorphism $c$ of order $3$ such that $c^*\sigma = \omega\sigma$. Here $\sigma$ is a non-zero holomorphic $1$-form on the elliptic curve.  

Let $X$ be the canonical resolution of the quotient threefold 
$$E_{\omega} \times E_{\omega} \times E_{\omega} / \langle {\rm diag}\, (\omega, \omega, \omega) \rangle\,\, ,$$
i.e., the blow-up at the singular points of type $1/3(1,1,1)$
and $Y$ be the canonical resolution of the quotient threefold 
$$E_{\omega} \times E_{\omega} \times E_{\omega} / \langle {\rm diag}\, (-\omega, -\omega, -\omega) \rangle\,\, ,$$
i.e., the blow-up at the singular points of type $1/2(1,1,1)$, $1/3(1,1,1)$, $1/6(1,1,1)$. 

$Y$ is then the canonical resolution of $X/\iota$, where $\iota$ is the involution naturally induced by ${\rm diag}\, (-1, -1, -1)$. 
It is well-known that $X$ is a Calabi-Yau threefold with very special properties, e.g., it is rigid  and plays a crucial role in the classification of fiber space structures on Calabi-Yau threefolds (\cite{Be82}, \cite{OS01} and references therein). Our theorem shows that $X$ has also a rich structure in the complex dynamical view. We remark that also for Calabi-Yau manifolds, it seems much harder to construct biregular automorphisms compared with birational selfmaps (See for instance \cite{Og12}, \cite{CO12}). On the other hand, our new manifold $Y$ seems so far caught no attention. 

Now we state the main result:
\begin{theorem}\label{main}

(1) $Y$ is rational, i.e., birationally equivalent to ${\mathbf P}^3$. 

(2) Both $X$ and $Y$ admit primitive biregular automorphisms of positive entropy whose first and second dynamical degrees are not the same (hence they are not Salem numbers).
\end{theorem}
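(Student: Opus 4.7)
\textbf{Part (2): automorphisms of positive entropy with $\lambda_1\ne\lambda_2$.}
My plan is to produce $f_M$ from a group automorphism $M \in {\rm GL}_3({\mathbf Z}[\omega])$ of $A := E_\omega\times E_\omega\times E_\omega$. Since $\langle \omega\rangle$ and $\langle -\omega\rangle$ consist of scalar matrices, any such $M$ commutes with the quotient, permutes the singular points of $A/G$, and hence lifts biregularly to the canonical resolutions $X$ and $Y$. Because $\omega\bar\omega=1$, both $\omega$ and $-\omega$ act trivially on $H^{1,1}(A) = H^{1,0}\otimes H^{0,1}$ and on $H^{2,2}(A) = \Lambda^2 H^{1,0}\otimes \Lambda^2 H^{0,1}$, so these Hodge pieces descend entirely, and $M$ acts on the exceptional summands of $H^{1,1}$ and $H^{2,2}$ of the resolutions only by a permutation of spectral radius $1$. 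If the eigenvalues of $M$ are $\alpha,\beta,\bar\beta$ with $\alpha>1$ real and $|\beta|<1$ complex, then $\alpha|\beta|^2 = |\det M| = 1$, and a direct spectral-radius computation on $M\otimes\bar M$ and $\Lambda^2 M\otimes\Lambda^2\bar M$ yields
\[
\lambda_1(f_M) = \alpha^2,\qquad \lambda_2(f_M) = \alpha^2|\beta|^2 = \alpha.
\]
For the concrete choice $M = \bigl(\begin{smallmatrix}1&1&1\\1&0&0\\0&1&0\end{smallmatrix}\bigr)\in{\rm GL}_3({\mathbf Z})$, whose characteristic polynomial $t^3 - t^2 - t - 1$ has the tribonacci constant $\alpha\approx 1.839$ as its unique real root, one obtains $\lambda_1 = \alpha^2 > \alpha = \lambda_2 > 1$. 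Positive entropy follows from Gromov--Yomdin, primitivity from Theorem~\ref{Restriction}, and since $\lambda_1 \ne \lambda_2$, neither is a Salem number by the dimension-$3$ result cited at the end of Remark~\ref{PrimitiveEntropy}.

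\textbf{Part (1): rationality of $Y$.} I begin with Hodge-theoretic preliminaries: $(-\omega)^p\ne 1$ for $p = 1,2,3$ annihilates the $G$-invariant holomorphic $p$-forms on $A$, and $1+\omega = -\omega^2 \in {\mathbf Z}[\omega]^{\times}$ (combined with the existence of a fully fixed point at the origin) forces $Y$ to be simply connected by a standard covering-space computation. So $Y$ has the Hodge diamond of a rational threefold; the task is to upgrade this to honest rationality.

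To do so, my plan is to exploit the presentation $Y = X/\iota$ up to resolution, where $\iota$ is the involution induced by $-1\in{\rm Aut}(A)$ and $X$ is the Calabi--Yau resolution of $A/\langle \omega\rangle$. Since the holomorphic $3$-form on $X$ is anti-invariant under $\iota$, the quotient $X/\iota$ has no holomorphic $3$-form and is a candidate rational threefold; my aim is to realize this rationality by constructing a ${\mathbf P}^1$-fibration or a conic-bundle structure on $Y$ over a known rational surface, and ultimately an explicit birational map $Y\dashrightarrow {\mathbf P}^3$.

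The main obstacle is that naive $\langle-\omega\rangle$-equivariant morphisms $A\to B$ to lower-dimensional abelian varieties have generic fibres that are again abelian varieties (because $-\omega$ acts freely on a generic point of $B$), so the descended fibrations $Y\to B/G$ have non-rational generic fibres. The subtle point is therefore to use the extra $\iota$-quotient together with the classical geometry of the ${\rm CM}$ elliptic curve $E_\omega$ --- the modular identification $E_\omega/\langle-\omega\rangle \cong {\mathbf P}^1$ and the invariant theory of $\mu_6$ acting diagonally on $E_\omega\times E_\omega$ --- to produce a fibration with genuinely rational fibres. This geometric upgrade from Hodge numbers to an explicit rational model is where I expect the bulk of the work, and the main obstacle, to lie.
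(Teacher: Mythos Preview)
Your Part~(2) is correct and follows the same strategy as the paper: take a matrix in ${\rm SL}_3({\mathbf Z})\subset {\rm GL}_3({\mathbf Z}[\omega])$, descend through the scalar quotients to biregular automorphisms of $X$ and $Y$, compute $\lambda_1,\lambda_2$ on the torus via $M\otimes\bar M$ and $\Lambda^2 M\otimes\Lambda^2\bar M$, and invoke Theorem~\ref{Restriction}. Your tribonacci matrix gives $\lambda_1=\alpha^2>\alpha=\lambda_2$, whereas the paper's matrix $P_a$ (characteristic polynomial $x^3-3a^2x+1$, three real roots) gives $\lambda_2>\lambda_1$; either inequality suffices for Theorem~\ref{Restriction}(2). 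One small point: rather than arguing about the exceptional summands directly, the paper simply uses that dynamical degrees are invariant under generically finite maps (Remark~\ref{SingularRelativeSetting}(2)), so the computation on $A$ is already the answer for $X$ and $Y$.

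Your Part~(1), however, is not a proof but a statement of intent. The Hodge-theoretic preliminaries are correct and motivate the expectation, but knowing the Hodge diamond and simple-connectedness does not establish rationality in dimension~$3$, and you acknowledge this yourself. You correctly guess that the target is a conic-bundle structure with a section over a rational base, but you have not produced one, and your discussion of the ``main obstacle'' shows you do not yet see how.

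The paper's argument is entirely explicit and bypasses abstract considerations. Write $E_\omega$ as the affine curve $y^2=x^3-1$ with $-\omega$ acting by $(x,y)\mapsto(\omega x,-y)$; then $A$ has affine coordinate ring $\tilde R={\mathbf C}[x_1,x_2,x_3,y_1,y_2,y_3]/(y_k^2-x_k^3+1)$, and one computes the invariant subring $\tilde R^{\langle g\rangle}$ and its fraction field by hand. The key observation is that this fraction field is generated by
\[
s=\frac{y_3}{y_1},\quad t=\frac{y_2}{y_1},\quad z=\frac{x_2}{x_1},\quad w=\frac{x_3}{x_1},
\]
subject to the single relation $(w^3-1)(t^2-1)=(z^3-1)(s^2-1)$, obtained by eliminating $u=y_1^2$ from $z^3=(t^2u+1)/(u+1)$ and $w^3=(s^2u+1)/(u+1)$. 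This quintic in ${\mathbf A}^4$ is visibly a conic in $(s,t)$ over ${\mathbf A}^2_{z,w}$ with the rational section $(s,t)=(1,1)$, hence rational. The whole argument is two pages of elementary algebra; the content is in finding these coordinates, which your plan does not supply.
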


Our $X$ and $Y$ are the first explicit examples of Calabi-Yau threefolds and rational threefolds which admit biregular, primitive automorphisms of positive entropy. The condition that the first and second dynamical degrees are not the same makes the results in \cite{dinh-sibony3}\cite{dinh-sibony4} and Dinh-de Thelin \cite{dinh-deThelin} applicable to these examples, therefore showing that they have good  dynamical properties. As we explained in the last subsection, the crucial point of the proof is the rationality of $Y$. Our method is quite elementary but tricky. We find a birational model of $Y$, which is an affine quintic hypersurface of a very simple form (Theorem \ref{BirationalModel}), admitting a conic bundle structure with a rational section over the affine space ${\mathbf C}^2$. 

Our construction is originally motivated by the following closely related question asked by Kenji Ueno (\cite[Page 199]{Ue75}) in 1975 and asked again by Fr\'ed\'eric Campana (\cite{Ca12}) in 2012:

\begin{question}\label{ueno}
Let $E_{\sqrt{-1}} := {\mathbf C}/({\mathbf Z} + {\mathbf Z}\sqrt{-1})$ be the elliptic curve of period $\sqrt{-1}$. 
Let $Z$ be the canonical resolution (\cite[Page 199]{Ue75}) 
of the quotient threefold 
$$E_{\sqrt{-1}} \times E_{\sqrt{-1}} \times E_{\sqrt{-1}} / \langle {\rm diag}\, (\sqrt{-1}, \sqrt{-1}, \sqrt{-1}) \rangle\,\, ,$$
i.e., the blow up at the singular points of type $1/2(1,1,1)$ and 
$1/4(1,1,1)$. 
Is this $Z$ rational or unirational?
\end{question} 

\begin{remark}\label{CurrentProgress}

(1) Campana himself showed that $Z$ is rationally connected (\cite{Ca12}).

(2) Using a method which is similar to, but slightly more involved than, that in this paper, in a joint work with Fabrizio Catanese, we showed that $Z$ is unirational. This is done by finding an explicit affine quintic hypersurface, say $Z'$, birationally equivalent to $Z$ (\cite{COT13}, September 2013). 

(3) Using Brauer's group theory and the explicit equation of $Z'$ in \cite{COT13}, Colliot-Th\'el\`ene finally proved that $Z'$ rational, whence so is $Z$ (\cite{CTh13}, October 2013).

(4) However, since $E_{\sqrt{-1}} \times E_{\sqrt{-1}} \times E_{\sqrt{-1}} / \langle {\rm diag}\, (-1, -1, -1) \rangle\,\, ,$ is already a minimal threefold with 64 singular points of type $1/2(1,1,1)$, we can {\it not} construct a smooth Calabi-Yau threefold from $E_{\sqrt{-1}} \times E_{\sqrt{-1}} \times E_{\sqrt{-1}}$ as in Theorem \ref{main}. 

\end{remark}

The remaining of this paper is organized as follows. In Section 2 we show that $Y$ is rational. In Section 3 we will recall sufficient details on dynamical degrees and relative dynamical degrees for use in Section 4. In Section 4, we first prove our numerical criterion of primitivity of birational automorphisms (Theorem \ref{Restriction}) by using the results in Section 3. Then we construct primitive automorphisms of positive entropy of both $X$ and $Y$ in a uniform manner. 

{\bf Ackowledgement.} We would like to thank Professors Serge Cantat, Fr\'ed\'eric Campana, and De-Qi Zhang for valuable communications relevant to this work. We would like to dedicate this paper to Professor Fr\'ed\'eric Campana on the occasion of his 60-th birthday. In fact, this paper is much motivated by the above mentioned question \ref{ueno} in \cite{Ca12}. 

\section{Rationality of $Y$}

In this section, we shall show that $Y$ in Theorem \ref{main} is rational.

\begin{lemma}\label{Affine}
$E_{\omega}$ is the projective non-singular model, say $\tilde{C}$, of the affine curve 
$$C : y^2 = x^3 -1\,\, .$$ Moreover, the complex multiplication $-\omega$ on $E_{\omega}$ is the extension $\tilde{g}$ of the automorphism of $C$ given by
$$g^* (x, y) \mapsto (\omega x, -y)\,\, .$$
\end{lemma}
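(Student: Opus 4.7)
The plan is to realize $\tilde C$ as the target of the Weierstrass parametrization attached to the lattice $\Lambda := \Z + \Z\omega$, and then to transport the multiplication-by-$(-\omega)$ endomorphism across this identification.

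First I would observe that $\omega \Lambda = \Lambda$ (since $\omega^{2} = -1-\omega \in \Lambda$), so by homogeneity of the Eisenstein series one has $G_{4}(\Lambda) = 0$, and the Weierstrass $\wp$-function $\wp = \wp_{\Lambda}$ satisfies a differential equation of the shape $(\wp')^{2} = 4\wp^{3} - g_{3}$ with $g_{3} = g_{3}(\Lambda) \neq 0$ (non-vanishing because $E_{\omega}$ is smooth, so the cubic has non-zero discriminant). Choosing $\alpha, \beta \in \C^{\times}$ with $\beta^{2} g_{3} = 1$ and $\alpha^{3} = 4\beta^{2}$, the map
$$\varphi : E_{\omega} \longrightarrow \tilde C,\qquad z \longmapsto \bigl(\alpha\, \wp(z),\; \beta\, \wp'(z)\bigr)$$
is a biregular isomorphism of elliptic curves sending $0$ to the point at infinity, which settles the first assertion.

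For the second assertion, since $-\omega$ preserves $\Lambda$, the map $\mu_{-\omega} : z \mapsto -\omega z$ is an automorphism of $E_{\omega}$ fixing the origin. Homogeneity of $\wp$ and $\wp'$ gives $\wp(\omega z) = \omega^{-2}\wp(z) = \omega\,\wp(z)$ and $\wp'(\omega z) = \omega^{-3}\wp'(z) = \wp'(z)$; combining these with the fact that $\wp$ is even and $\wp'$ is odd yields
$$\wp(-\omega z) \;=\; \omega\, \wp(z),\qquad \wp'(-\omega z) \;=\; -\wp'(z).$$
Hence $\varphi \circ \mu_{-\omega} \circ \varphi^{-1}$ acts on the affine chart of $\tilde C$ by $(x,y) \mapsto (\omega x, -y)$, which is exactly the automorphism $\tilde g$ of the statement. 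Note that $(\omega x,-y)$ preserves the equation $y^{2} = x^{3}-1$ simply because $\omega^{3} = 1$, so it does define a genuine automorphism of $C$, and the extension to $\tilde C$ is automatic.

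The only real chore is bookkeeping the normalization constants $\alpha, \beta$ and applying the homogeneity of $\wp$ consistently; no step looks like a genuine obstacle. As sanity checks, the isomorphism $\tilde C \simeq E_{\omega}$ can be seen alternatively from the fact that both curves have $j$-invariant $0$, and the identification of $\tilde g$ with $\mu_{-\omega}$ is already forced at the level of holomorphic tangent vectors at the origin (both act on the invariant differential $dx/y$ by multiplication by $-\omega$), since an endomorphism of an elliptic curve fixing the origin is determined by its action on the tangent line there.
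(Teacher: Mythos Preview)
Your argument is correct. The route, however, differs from the paper's. The paper does not build the isomorphism explicitly: it simply notes that the projective closure $y^{2}t = x^{3}-t^{3}$ is a smooth plane cubic, hence an elliptic curve, and then computes that $\tilde g$ fixes the point at infinity $O=[0:0:1]$ and acts on the holomorphic differential $\sigma = dx/y$ by $\tilde g^{*}\sigma = -\omega\,\sigma$. Since $E_{\omega}$ is characterized (as recalled in the introduction) as the unique elliptic curve carrying an automorphism whose eigenvalue on $H^{0}(\Omega^{1})$ is a primitive sixth root of unity, this forces $(\tilde C,\tilde g)\simeq (E_{\omega},-\omega)$.

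Your approach via the Weierstrass $\wp$-function is more constructive: you actually write down the biregular map $\varphi$ and transport $\mu_{-\omega}$ through it. This buys you an explicit uniformization, at the cost of invoking the analytic theory of $\wp$ and tracking the rescaling constants $\alpha,\beta$. The paper's argument is shorter and stays entirely within algebraic geometry, but leans on the classification of automorphism groups of elliptic curves. Amusingly, the ``sanity check'' in your final paragraph --- that $\tilde g$ is pinned down by its action on $dx/y$ at the fixed point --- is essentially the whole of the paper's proof.
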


\begin{proof} The projective curve $y^2t=x^3-t^3$ is a smooth cubic in $\mathbf{P}^2$ with homogeneous coordinates $[t:x:y]$, hence it defines an elliptic curve $\tilde{C}$ which is the compactification of $C$ in $\mathbf{P}^2$.  The rational $1$-form $dx/y$ on $C$ defines a regular $1$-form $\sigma$ of $\tilde{C}$. Since $g^*dx/y = -\omega dx/y$ and the point $O=[0:0:1]$ is a fixed point, the unique extension $\tilde{g}$ of $g$ defines an automorphism of $\tilde{C}$ such that $\tilde{g}^*\sigma = -\omega \sigma$ with a fixed point $O$. Thus, identifying the origin of $E_{\omega}$ with the point $O$, we obtain $(E_{\omega},-\omega) \simeq (\tilde{C}, \tilde{g})$.
\end{proof}

Let $k \in \{1,2,3\}$ and let ${\mathbf C}_k^2$ be the affine plane with coordinates $(x_k, y_k)$, and 
$$C_k : y_k^2 = x_k^3 -1\,\, ,\,\, g_k : (x_k, y_k) \mapsto (\omega x_k, -y_k)\,\, ,$$
$$V := C_1 \times C_2 \times C_3\,\, ,\,\, g = g_1 \times g_2 \times g_3\,\, .$$
By Lemma \ref{Affine}, our $Y$ is birationally equivalent to the affine threefold $W := V/\langle g \rangle$. 

\begin{lemma}\label{Invariant} The affine coordinate ring ${\mathbf C}[W]$ of $W$ is the subring  
$$R := {\mathbf C}[y_1^2, y_2^2, y_3^2, y_1y_2, y_2y_3, y_3y_1, x_1^ix_2^jx_3^k\, (0 \le i, j, k \le 2\, ,\, i+j+k = 3)]\,\, ,$$ 
of the quotient ring
$${\mathbf C}[y_1, y_2, y_3, x_1, x_2, x_3]/(y_1^2 -x_1^3 +1, y_2^2 -x_2^3 +1, 
y_3^2 -x_3^3 +1)\,\, .$$
\end{lemma}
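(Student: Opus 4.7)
The plan is to compute the coordinate ring $\mathbf{C}[W]$ directly as the invariant ring $A^{\langle g\rangle}$, where
$$A \;=\; \mathbf{C}[x_1,y_1,x_2,y_2,x_3,y_3]\big/\big(y_k^2 - x_k^3 + 1 : k=1,2,3\big)$$
is the coordinate ring of $V=C_1\times C_2\times C_3$, and then to match $A^{\langle g\rangle}$ with the explicit subring $R$. The approach is to decompose $A$ into weight spaces for $\langle g\rangle\simeq\mathbf{Z}/6\mathbf{Z}$ and then read off the trivial weight component.

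First I would use the presentation of $A$ as a free $\mathbf{C}[x_1,x_2,x_3]$-module of rank $8$ with basis $\{y_1^{b_1} y_2^{b_2} y_3^{b_3} : b_k\in\{0,1\}\}$, obtained by reducing higher powers of $y_k$ via $y_k^2 = x_k^3-1$. On such reduced monomials the generator $g$ acts by the character $\omega^{a_1+a_2+a_3}(-1)^{b_1+b_2+b_3}$ on $x_1^{a_1}x_2^{a_2}x_3^{a_3}y_1^{b_1}y_2^{b_2}y_3^{b_3}$. Since $\omega$ and $-1$ have coprime orders, invariance is equivalent to the separate conditions $a_1+a_2+a_3\equiv 0 \pmod 3$ and $b_1+b_2+b_3$ even; with $b_k\in\{0,1\}$ the latter selects only the tuples $(0,0,0)$, $(1,1,0)$, $(1,0,1)$, $(0,1,1)$, yielding
$$A^{\langle g\rangle} \;=\; S \,\oplus\, S\!\cdot\! y_1 y_2 \,\oplus\, S\!\cdot\! y_1 y_3 \,\oplus\, S\!\cdot\! y_2 y_3,$$
where $S$ is the image in $A$ of the third Veronese subring $\mathbf{C}[x_1,x_2,x_3]^{(3)}$, i.e., those polynomials whose every monomial has total degree divisible by $3$.

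Next I would produce generators of $S$. By the standard fact that the $d$-th Veronese subring of a polynomial ring is generated in degree $d$, the ring $\mathbf{C}[x_1,x_2,x_3]^{(3)}$ is generated as a $\mathbf{C}$-algebra by the ten monomials $x_1^ix_2^jx_3^k$ with $i+j+k=3$. Of these, $x_1^3, x_2^3, x_3^3$ become $y_1^2+1, y_2^2+1, y_3^2+1$ in $A$ via the defining relations, so within $A$ the ring $S$ is already contained in the subring generated by $y_1^2, y_2^2, y_3^2$ and the seven degree-$3$ monomials with $0\le i,j,k\le 2$. Throwing in $y_1 y_2, y_1 y_3, y_2 y_3$ to handle the other three summands then gives $A^{\langle g\rangle}\subseteq R$; the reverse inclusion is immediate because each listed generator is manifestly $g$-invariant.

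The only substantive input is the generation statement for the Veronese subring, which is a well-known textbook fact; the rest of the argument is character bookkeeping together with the elementary substitution $x_k^3\leftrightarrow y_k^2+1$.
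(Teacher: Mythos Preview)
Your proof is correct and follows the same overall strategy as the paper's: express the coordinate ring as a free module over a polynomial subring, decompose into weight spaces for the cyclic action of $g$, and read off the invariant monomials. The difference is in the choice of decomposition. The paper views $\tilde R$ as a free module over $\mathbf{C}[y_1,y_2,y_3]$ with basis $\{x_1^ix_2^jx_3^k : 0\le i,j,k\le 2\}$, so the constraint $0\le i,j,k\le 2$ on the $x$-exponents is built in from the start and no Veronese generation fact is needed; it then splits the invariant computation into two steps via $\langle g\rangle = \langle g^2, g^3\rangle$. You take the dual route, using $A$ as a free $\mathbf{C}[x_1,x_2,x_3]$-module with basis $\{y_1^{b_1}y_2^{b_2}y_3^{b_3} : b_k\in\{0,1\}\}$ and a single character computation, at the price of invoking generation of the third Veronese and then substituting $x_k^3 = y_k^2+1$ to bring the $x$-exponents back into range. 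Both arguments work; the paper's choice is marginally more direct in reaching the stated generators, while yours has a smaller module basis and a cleaner one-step weight analysis.
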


\begin{proof} Since $V = C_1 \times C_2 \times C_3$, the affine coordinate ring of $V$ is
$${\mathbf C}[y_1, x_1]/(y_1^2 -x_1^3 +1) \otimes_{{\mathbf C}} {\mathbf C}[y_2, x_2]/(y_2^2 -x_2^3 +1) \otimes_{{\mathbf C}} 
{\mathbf C}[y_3, x_3]/(y_3^2 -x_3^3 +1)\,\, .$$
This ring is naturally isomorphic to
$$\tilde{R} := {\mathbf C}[y_1, y_2, y_3, x_1, x_2, x_3]/(y_1^2 -x_1^3 +1, y_2^2 -x_2^3 +1, y_3^2 -x_3^3 +1)\,\, .$$
Note that the subring ${\mathbf C}[y_k]$ of ${\mathbf C}[y_k, x_k]/(y_k^2 -x_k^3 +1)$ is isomorphic to the polynomial ring of one variable and ${\mathbf C}[y_k, x_k]/(y_k^2 -x_k^3 +1)$ is a free 
${\mathbf C}[y_k]$-module with free basis $\langle 1, x_k, x_k^2 \rangle$. 
Therefore the subring ${\mathbf C}[y_1, y_2, y_3]$ of $\tilde{R}$ is isomorphic to the polynomial ring of the three variables and $\tilde{R}$ is also a free ${\mathbf C}[y_1, y_2, y_3]$-module with the free basis 
$$x_1^ix_2^jx_3^k\,\, ,\,\, i, j, k \in \{0, 1,2\}\,\, .$$
In particular, each element of $\tilde{R}$ is uniquely expressed in the following form:
$$f := \sum_{(i, j, k) \in \{0, 1, 2\}^3} a_{ijk}x_1^ix_2^jx_3^k$$
where $a_{ijk} = a_{ijk}(y_1, y_2, y_3)$ are polynomials of $y_1$, $y_2$, $y_3$.
The action of $g$ on $\tilde{R}$ is given by $gy_k= -y_k$ and $gx_k = \omega x_k$ 
and the affine coordinate ring of $V/\langle g \rangle$ is isomorphic to the invariant ring $\tilde{R}^{g}$. Here and hereafter, we denote the action of $g$ 
on $\tilde{R}$, which is $g^*$, simply by $g$. 

Since $\langle g \rangle = \langle g^2, g^3 \rangle$, we have 
$$\tilde{R}^{g} = (\tilde{R}^{g^2})^{g^3}\,\,.$$
Note that $g^2y_m = y_m$ and $g^2x_m = \omega^2 x_m$ ($m=1,2,3$). 
Hence, any polynomial $a_{ijk}(y_1, y_2, y_3)$ are $g^2$-invariants and therefore $f \in R = \tilde{R}^{g^2}$ if and only if $x_1^ix_2^jx_3^k$ are all $g^2$-invariant (for $a_{i,j,k} \not= 0$). That is, $i+j+k$ is divisible by $3$:
$$3 \vert (i + j + k)\,\, .$$
Note then that $i+j+k = 0$, $3$ and $6$ and the term of $i+j+k = 6$ is only 
$(x_1x_2x_3)^2$ by $0 \le i, j, k \le 2$. Hence 
$$\tilde{R}^{g^2} = {\mathbf C}[y_1, y_2, y_3, x_1^ix_2^jx_3^k\, (0 \le i, j, k \le 2\,\, ,\,\, i+j+k = 3)]\,\, .$$
Since $g^3x_m = x_m$ and $g^3y_m = -y_m$ ($m = 1,2, 3$), it follows that any polynomial in $x_1,x_2,x_3$ is $g^3$- invariant. Hence $f$ is $g^3$-invariant if and only if for any monomial $y_1^{i_1}y_2^{i_2}y_3^{i_3}$ appearing in $a_{ijk}$ we have $i_1+i_2+i_3=$ an even number. Therefore 
$$(\tilde{R}^{g^2})^{g^3} = {\mathbf C}[y_my_n\, (1 \le m \le n \le 3)\, , \, x_1^ix_2^jx_3^k\, (0 \le i, j, k \le 2\,\, ,\,\, i+j+k = 3)]\,\, ,$$
as claimed.
\end{proof}

Let $Q(\tilde{R})$ be the quotient field of $\tilde{R}$.

\begin{lemma}\label{QuotientField} The rational function field ${\mathbf C}(Y)$ of $Y$, hence the function field of $W$, 
is isomorphic to the following subfield of $Q(\tilde{R})$: 
$${\mathbf C}(y_1^2, \frac{y_2}{y_1}, \frac{y_3}{y_1}, \frac{x_2}{x_1}, \frac{x_3}{x_1})\,\, .$$ 
\end{lemma}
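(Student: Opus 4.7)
The plan is to reduce the statement to showing that $Q(R)$, the field of fractions of the invariant ring computed in Lemma \ref{Invariant}, equals the subfield $K := \mathbf{C}(y_1^2, y_2/y_1, y_3/y_1, x_2/x_1, x_3/x_1)$ of $Q(\tilde{R})$. Since $Y$ is birational to $W = V/\langle g\rangle$, we have $\mathbf{C}(Y) = \mathbf{C}(W)$, and the standard identity $Q(\tilde{R})^{\langle g\rangle} = Q(\tilde{R}^{\langle g\rangle}) = Q(R)$ for a finite group acting by automorphisms on an integral domain identifies $\mathbf{C}(Y)$ with $Q(R)$. So it remains to prove $Q(R) = K$ by mutual inclusion.

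For $K \subseteq Q(R)$, it suffices to observe that each of the five generators of $K$ is $\langle g\rangle$-invariant: since $g$ acts by $y_k\mapsto -y_k$ and $x_k\mapsto \omega x_k$, the element $y_1^2$ is fixed, and the ratios $y_j/y_1$ and $x_j/x_1$ are fixed because the scalars cancel in the quotient. Hence all five generators lie in $Q(\tilde{R})^{\langle g\rangle} = Q(R)$.

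For the reverse inclusion $Q(R) \subseteq K$, I will show that every ring generator of $R$ from Lemma \ref{Invariant} already lies in $K$. The quadratic $y$-monomials are immediate: $y_j^2 = y_1^2\cdot(y_j/y_1)^2$ and $y_m y_n = y_1^2\cdot(y_m/y_1)(y_n/y_1)$. For the cubic $x$-monomials $x_1^i x_2^j x_3^k$ with $i+j+k=3$ and $0 \le i,j,k \le 2$, the key is to use the defining relation $x_1^3 = y_1^2 + 1$ in $\tilde{R}$ to replace $x_1^3$ by a polynomial in $y_1^2$:
\[
x_1^i x_2^j x_3^k \;=\; x_1^{\,i+j+k}(x_2/x_1)^j(x_3/x_1)^k \;=\; (y_1^2+1)(x_2/x_1)^j(x_3/x_1)^k \;\in\; K.
\]
This yields $R\subseteq K$, hence $Q(R)\subseteq K$, and combined with the first inclusion gives $\mathbf{C}(Y) = Q(R) = K$. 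There is no real obstacle; the only mild point is the identity $Q(\tilde{R})^{\langle g\rangle} = Q(R)$, which is standard and follows by multiplying a $g$-invariant fraction by the $\langle g\rangle$-orbit product of its denominator.
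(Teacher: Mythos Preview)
Your proof is correct and follows essentially the same route as the paper's: both verify the mutual inclusions $K \subseteq Q(R)$ and $R \subseteq K$, with the key identity $x_1^i x_2^j x_3^k = (y_1^2+1)(x_2/x_1)^j(x_3/x_1)^k$ being identical. The only cosmetic difference is that for $K \subseteq Q(R)$ the paper exhibits the generators explicitly as quotients of elements of $R$ (e.g.\ $x_2/x_1 = (x_2^2x_1)/(x_1^2x_2)$), whereas you invoke the standard identity $Q(\tilde{R})^{\langle g\rangle} = Q(R)$.
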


\begin{proof} It is clear that the field above, say $K$, is a subfield of ${\mathbf C}(W)$ (e.g. $y_2/y_1 = (y_2^2)/(y_1y_2)$ and $x_2/x_1 = (x_2^2x_1)/(x_1^2x_2)$). It suffices to show that each generator of ${\mathbf C}[W]$ in Lemma \ref{Invariant} is in $K$. For $y_my_n$, we have  
$$y_my_n = y_1^2\frac{y_m}{y_1}\frac{y_n}{y_1}\,\, \in K\,\, .$$
For $x_1^ix_2^jx_3^k$ with $i+j+k=3$, we have 
$$x_1^ix_2^jx_3^k = (\frac{x_2}{x_1})^i(\frac{x_3}{x_1})^j(x_1)^3 = (\frac{x_2}{x_1})^i(\frac{x_3}{x_1})^j(y_1^2+1)\,\, \in K\,\, .$$
Hence the result follows.
\end{proof}

\begin{lemma}\label{AbstractField} The rational function field ${\mathbf C}(Y)$ of $Y$ 
is isomorphic to the following subfield of $Q(\tilde{R})$:
$${\mathbf C}(t, s, z, w)\,\, ,$$
with a single equation 
$$(w^3 - 1)(t^2 -1) = (z^3 - 1)(s^2 -1)\,\, .$$
More precisely, the subfield ${\mathbf C}(s, z, w)$ of $Q(\tilde{R})$ is isomorphic to the purely transcendental extension of ${\mathbf C}$ of transcendental degree $3$, and the field ${\mathbf C}(Y)$ is isomorphic to the quotient ring
$${\mathbf C}(s, z, w)[T]/I\,\, ,$$ 
where $I$ is the principal ideal generated by 
$$(w^3 -1)(T^2 - 1)- (z^3 -1)(s^2 - 1)\,\, ,$$
in the polynomial ring ${\mathbf C}(s, z, w)[T]$ over ${\mathbf C}(s, z, w)$.  
\end{lemma}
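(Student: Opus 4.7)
The plan is to choose new generators of ${\mathbf C}(Y)$ inside $Q(\tilde{R})$, eliminate $y_1^2$ algebraically to produce the claimed single relation, and then settle the degree-two irreducibility via a symmetry argument.

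I would set
\[
s := y_2/y_1,\quad t := y_3/y_1,\quad w := x_2/x_1,\quad z := x_3/x_1
\]
in $Q(\tilde{R})$, so that Lemma \ref{QuotientField} gives ${\mathbf C}(Y) = {\mathbf C}(y_1^2, s, t, w, z)$. Substituting $y_2 = sy_1$, $x_2 = wx_1$, and $x_1^3 = y_1^2 + 1$ into the defining equation $y_2^2 = x_2^3 - 1$ yields $s^2 y_1^2 = w^3(y_1^2 + 1) - 1$, hence
\[
y_1^2 = \frac{w^3 - 1}{s^2 - w^3},
\]
and the analogous computation on the third factor gives $y_1^2 = (z^3 - 1)/(t^2 - z^3)$. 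Thus $y_1^2$ is already in ${\mathbf C}(s, z, w)$, so ${\mathbf C}(Y) = {\mathbf C}(s, t, z, w)$; equating the two expressions for $y_1^2$ and simplifying produces the single relation $(w^3 - 1)(t^2 - 1) = (z^3 - 1)(s^2 - 1)$. Since $w^3 \ne 1$ in ${\mathbf C}(Y)$ (otherwise $x_2^3 = x_1^3$ in ${\mathbf C}(V)$, contradicting the independence of $x_1, x_2$), this relation is a genuine quadratic in $t$ over ${\mathbf C}(s, z, w)$, so $t$ is algebraic of degree at most two over that subfield. Because ${\mathbf C}(Y)$ has transcendence degree $3$ over ${\mathbf C}$ (as $Y$ is a threefold) and equals ${\mathbf C}(s, t, z, w)$, the subfield ${\mathbf C}(s, z, w)$ must also have transcendence degree $3$, so $s, z, w$ are algebraically independent and ${\mathbf C}(s, z, w)$ is purely transcendental of degree three.

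What remains is the irreducibility of $P(T) := (w^3 - 1)(T^2 - 1) - (z^3 - 1)(s^2 - 1)$ in ${\mathbf C}(s, z, w)[T]$, equivalently the assertion $t \notin {\mathbf C}(s, z, w)$. I would exhibit a nontrivial element of $\Gal({\mathbf C}(Y)/{\mathbf C}(s, z, w))$ by symmetry. Consider the involution $\tau$ of $V = C_1 \times C_2 \times C_3$ acting as the identity on $C_1 \times C_2$ and as $(x_3, y_3) \mapsto (x_3, -y_3)$ on $C_3$. A direct check shows $\tau g = g\tau$ (both compositions send $(x_3, y_3)$ to $(\omega x_3, y_3)$), so $\tau$ descends to an involution $\bar\tau$ of $W = V/\langle g \rangle$ and thus acts on ${\mathbf C}(W) = {\mathbf C}(Y)$. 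By construction $\bar\tau^{\,*}$ fixes each of $s, w, z$ (none involves $y_3$) while sending $t = y_3/y_1$ to $-t$. Were $t$ in ${\mathbf C}(s, z, w)$, then $t = \bar\tau^{\,*}(t) = -t$, forcing $t = 0$, which is absurd. Hence $P(T)$ is irreducible, ${\mathbf C}(s, z, w)[T]/I$ is a field of degree two over ${\mathbf C}(s, z, w)$, and the map $T \mapsto t$ gives the asserted isomorphism with ${\mathbf C}(Y)$.

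The main obstacle is this last irreducibility step: the algebraic elimination producing the single relation is mechanical once the right generators are chosen, but distinguishing ${\mathbf C}(Y)$ from ${\mathbf C}(s, z, w)$ as a strict field extension—rather than equality—requires an additional ingredient, and the involution $\tau$ supplies it at once.
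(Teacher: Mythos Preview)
Your argument is correct and follows essentially the same route as the paper: introduce the ratios $y_i/y_1$ and $x_i/x_1$, use the curve equations to solve for $y_1^2$ in two ways, and equate to obtain the quintic relation (your labeling swaps $s\leftrightarrow t$ and $z\leftrightarrow w$ relative to the paper, which is immaterial by the symmetry between the second and third factors). The one substantive difference is the irreducibility step: the paper simply asserts that $(w^3-1)(T^2-1)-(z^3-1)(s^2-1)$ is irreducible over ${\mathbf C}(s,z,w)$, whereas you supply a proof via the involution $(x_3,y_3)\mapsto(x_3,-y_3)$ on the third factor, which commutes with $g$, descends to $W$, fixes $s,z,w$, and negates $t$. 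This Galois-theoretic argument is a clean addition; the alternative (implicit in the paper) is the direct check that $1+(z^3-1)(s^2-1)/(w^3-1)$ is not a square in the rational function field ${\mathbf C}(s,z,w)$, which is routine once one knows $s,z,w$ are algebraically independent but less illuminating than your symmetry argument.
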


\begin{proof} Set 
$$(1)\,\,\, u := y_1^2\,\, ,\,\, t := \frac{y_2}{y_1}\,\, ,\,\, s := \frac{y_3}{y_1}\,\, z := \frac{x_2}{x_1}\,\, ,\,\, w := \frac{x_3}{x_1}\,\, .$$
By Lemma \ref{QuotientField}, these $5$ elements are generators of ${\mathbf C}(W) = {\mathbf C}(Y)$.  
Then 
$$(2)\,\,\, y_2= ty_1\,\, ,\,\, y_3 = sy_1\,\, ,\,\, y_2^2 = t^2u\,\, ,\,\, y_3^2 = s^2u\,\, .$$
Hence
$$(3)\,\,\, z^3 =\frac{x_2^3}{x_1^3}=\frac{y_2^2+1}{y_1^2+1}=  \frac{t^2u+1}{u+1}\,\, ,\,\, w^3 = \frac{x_3^3}{x_1^3}=\frac{y_3^2+1}{y_1^2+1}=  \frac{s^2u+1}{u+1}\,\, .$$
Solving each equation in (3) in the variable $u$, we obtain
$$u = \frac{z^3 -1}{t^2 - z^3}\,\, ,\,\, u = \frac{w^3 -1}{s^2 - z^3}$$
that is, 
$$(4)\,\,\, \frac{-1}{u} = \frac{z^3 -t^2}{z^3 - 1} = 1 - \frac{t^2-1}{z^3 -1}$$and
$$(5)\,\,\, \frac{-1}{u} = \frac{w^3 -ts^2}{w^3 - 1} = 1 - \frac{s^2-1}{w^3 -1}\,\, .$$
Hence by (4) and (5), we find that ${\mathbf C}(W) = {\mathbf C}(t, s, z, w)$. Here $z$, $w$, $s$, $t$ satisfy a relation
$$\frac{t^2-1}{z^3 -1} = \frac{s^2-1}{w^3 -1},$$
which is the same as
$$(6)\,\,\, (w^3-1)(t^2-1) = (z^3 -1)(s^2 -1)\,\, .$$
Since $W$ is a $3$-dimensional projective variety, it follows from (6) that 
$s$, $z$, $w$ are a transcendental basis of ${\mathbf C}(W)$ and $t$ is 
algebraic over the field ${\mathbf C}(s, z, w)$. Therefore 
$${\mathbf C}(W) = {\mathbf C}(s, z, w)(t) = {\mathbf C}(s, z, w)[t]\,\, .$$ 
The equation (6), or more precisely, the polynomial
$$(7)\,\,\, (w^3-1)(T^2-1) - (z^3 -1)(s^2 -1)\,\,$$ 
gives an algebraic equation of $t$ over ${\mathbf C}(s, z, w)$. Since the polynomial (7) is irreducible over ${\mathbf C}(s, z, w)$, it follows that
$${\mathbf C}(s, z, w)[t] = {\mathbf C}(s, z, w)[T]/I$$
as claimed. 
\end{proof}

\begin{theorem}\label{BirationalModel} $Y$ is birationally equivalent to the affine hypersurface 
$Q$ defined by
$$(w^3 -1)(t^2 - 1) = (z^3 - 1)(s^2 - 1)$$
in the affine space ${\mathbf A}^4$ 
with affine coordinates $(t, s, z, w)$. 
\end{theorem}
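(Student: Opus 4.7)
The plan is to recognize that Theorem \ref{BirationalModel} is essentially a geometric repackaging of Lemma \ref{AbstractField}. By Lemma \ref{Affine}, $Y$ is birationally equivalent to the affine quotient threefold $W = V/\langle g\rangle$, and by Lemma \ref{AbstractField} the rational function field ${\mathbf C}(Y) = {\mathbf C}(W)$ is isomorphic to
\[
{\mathbf C}(s, z, w)[T]\,\big/\,\bigl((w^3-1)(T^2-1) - (z^3-1)(s^2-1)\bigr).
\]

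To convert this field-theoretic statement into the desired birational equivalence, I would first check that the polynomial
\[
F := (w^3-1)(T^2-1) - (z^3-1)(s^2-1) \in {\mathbf C}[s, z, w, T]
\]
is irreducible; however, this irreducibility is already invoked at the end of the proof of Lemma \ref{AbstractField}, so it may be taken for granted. Consequently $Q = V(F) \subset {\mathbf A}^4$ is an integral affine hypersurface of dimension $3$, and its function field ${\mathbf C}(Q)$ is exactly the field displayed above.

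Finally, since a birational equivalence between two integral varieties over ${\mathbf C}$ is equivalent to an isomorphism of their function fields, the explicit isomorphism ${\mathbf C}(Y) \cong {\mathbf C}(Q)$ provided by Lemma \ref{AbstractField} produces the required birational map $Y \dashrightarrow Q$. There is no real obstacle here; all the substantive work was already carried out in the substitutions $u = y_1^2$, $t = y_2/y_1$, $s = y_3/y_1$, $z = x_2/x_1$, $w = x_3/x_1$ and the elimination of $u$ leading to the relation $(w^3 -1)(t^2-1) = (z^3 -1)(s^2 -1)$, and so this theorem is simply the assertion that the corresponding affine hypersurface serves as a concrete birational model of $Y$.
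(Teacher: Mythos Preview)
Your proposal is correct and follows exactly the same approach as the paper: the paper's proof is the single sentence ``Lemma \ref{AbstractField} means that the rational function field of $Y$ is isomorphic to the rational function field of the affine hypersurface $Q$. This implies the result,'' and you have simply unpacked this by invoking the standard equivalence between birational maps of integral varieties and isomorphisms of their function fields. The only minor remark is that Lemma \ref{AbstractField} asserts irreducibility of $F$ in ${\mathbf C}(s,z,w)[T]$, whereas integrality of $Q$ requires irreducibility in ${\mathbf C}[s,z,w,T]$; this follows at once from Gauss's lemma since $F$ is primitive as a polynomial in $T$ over ${\mathbf C}[s,z,w]$, so there is no real gap.
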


\begin{proof} Lemma \ref{AbstractField} means that the rational function field of $Y$ is isomorphic to the rational function field of the affine hypersurface $Q$. This implies the result.  
\end{proof}

The next proposition completes the proof of Theorem \ref{main} (1):
\begin{proposition}\label{Rational} The hypersurface $Q$ in Theorem \ref{BirationalModel} is rational.
\end{proposition}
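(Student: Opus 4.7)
The plan is to exhibit $Q$ as a conic bundle over the affine plane and to exploit an obvious rational section. Concretely, consider the projection $\pi \colon Q \to \mathbf{A}^2$ defined by $(t, s, z, w) \mapsto (z, w)$. Since the defining equation of $Q$ is quadratic in $t$ and $s$, the generic fibre of $\pi$ is, over the function field $K := \mathbf{C}(z, w)$ of the base, the affine conic
\[ C_K \colon (w^3 - 1)\, t^2 - (z^3 - 1)\, s^2 \;=\; w^3 - z^3 \]
in the $(t, s)$-plane.

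The key observation is that $Q$ carries the constant rational section $(z, w) \mapsto (1, 1, z, w)$: both sides of the equation defining $Q$ vanish identically when $t = s = 1$. Thus $C_K$ has a $K$-rational point, and the partial derivatives of the defining polynomial with respect to $t$ and $s$ evaluate there to $2(w^3 - 1)$ and $-2(z^3 - 1)$, which are nonzero in $K$. So this point is smooth on $C_K$, and any smooth conic over a field with a smooth rational point is birational to $\mathbf{P}^1$; hence $C_K \simeq_K \mathbf{P}^1_K$.

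To make this explicit and thereby conclude, I would parametrize $C_K$ by the pencil of lines through $(1, 1)$: set $s = 1 + u$, $t = 1 + \lambda u$ with a new parameter $\lambda$. Substituting into the equation of $C_K$, each of $t^2 - 1$ and $s^2 - 1$ acquires a factor $u$; cancelling this factor leaves a linear equation in $u$, which expresses $u$ as a rational function of $(\lambda, z, w)$. Since the inverse $\lambda = (t - 1)/(s - 1)$ is visibly rational in $(t, s)$, this map is birational, and one obtains
\[ \mathbf{C}(Q) \;=\; \mathbf{C}(z, w, \lambda), \]
a purely transcendental extension of $\mathbf{C}$ of transcendence degree three. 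Therefore $Q$ is rational.

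There is essentially no obstacle here: the whole argument turns on spotting the ``trivial'' rational point $(1,1,z,w)$, which reduces rationality to the elementary fact that a smooth conic with a rational point is birational to $\mathbf{P}^1$. This cleanness stands in sharp contrast with the analogous question for the $E_{\sqrt{-1}}$-quotient (Question \ref{ueno}), where the corresponding conic bundle has no obvious section and rationality required the Brauer-group methods of \cite{CTh13}.
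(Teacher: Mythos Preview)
Your proof is correct and follows essentially the same route as the paper: project to the $(z,w)$-plane, observe the constant section $(t,s)=(1,1)$, and conclude that the generic fibre is a conic with a rational point, hence rational over $\mathbf{C}(z,w)$. Your explicit parametrization via $s=1+u$, $t=1+\lambda u$ is precisely the content of the paper's Remark~\ref{anotherproof}, with your parameter $\lambda=(t-1)/(s-1)$ being the reciprocal of the paper's $\gamma=(s-1)/(t-1)$.
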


\begin{proof} The restriction $\pi_{Q}$ of the natural projection to the last two factors factors
$$\pi : {\mathbf A}^4 \rightarrow {\mathbf A}^2\,\, ,\,\, (t,s,z,w) \mapsto (z, w)$$
gives a conic bundle structure of $Q$ over ${\mathbf A}^2$ with an obvious section $(s, t) = (1,1)$. Let $\eta$ be the generic point of ${\mathbf A}^2$ in the sense of scheme. Then the fiber $C_{\eta} := \pi_{Q}^{-1}(\eta)$ 
is the conic defined by 
$$(w^3 - 1)(t^2 - 1) = (z^3 - 1)(s^2 - 1)$$
in the affine plane $\pi^{-1}(\eta) = {\mathbf A}_{\eta}^{2}$ with affine coordinates $(s, t)$ over ${\mathbf C}({\mathbf A}^2) = {\mathbf C}(z, w)$. 
Moreover, the point $(s, t) = (1,1)$ 
is clearly a rational point of $C_{\eta}$ also 
over ${\mathbf C}({\mathbf A}^2)$. 
Hence $C_{\eta}$ is birational to the affine line ${\mathbf A}_{\eta}^{1}$ over ${\mathbf C}({\mathbf A}^2)$. For this, one may just consider the projection from the above rational point to ${\mathbf A}_{\eta}^{1}$ also over ${\mathbf C}({\mathbf A}^2)$. Note that the generic point $\tilde{\eta}$ of $Q$ is lying over 
$\eta$. Denote the residue field of $\tilde{\eta}$ (resp. of $\eta$) by $K(\tilde{\eta})$ (resp. $K(\eta)$). Then $K(\eta) = {\mathbf C}({\mathbf A}^2) = {\mathbf C}(z, w)$, $K(\eta) \subset K(\tilde{\eta})$ and
$${\mathbf C}(Q) = K(\tilde{\eta}) = K(\eta)(C_{\eta}) \simeq K(\eta)({\mathbf A}_{\eta}^{1}) = {\mathbf C}(z, w)(\gamma) = {\mathbf C}( z,w, \gamma)\,\, ,$$
where $\gamma$ is the affine coordinate function of ${\mathbf A}_{\eta}^{1}$.  
Since ${\rm dim}\, Q = 3$, the rational functions $z$, $w$ and $\gamma$ are algebraically independent over ${\mathbf C}$. Hence ${\mathbf C}(Q)$ is purely transcendental over ${\mathbf C}$. This implies the result.
\end{proof}

\begin{remark}\label{anotherproof} Finding a "good" explicit birational affine hypersurface as above is also crucial in both \cite{COT13} and \cite{CTh13}. Only for the rationality of our $W$, we can prove more explicitly as follows. Put 
$$\gamma :=(s-1)/(t-1)$$ 
in ${\mathbf C}(W)$. Then $s = \gamma (t-1)+1$ 
and from $(w^3-1)(t^2 - 1)=(z^3-1)(s^2 -1)$, we obtain a relation
\begin{eqnarray*}
(w^3-1)(t+1)=(z^3-1)\gamma (\gamma (t-1)+2).
\end{eqnarray*}
Since this relation is linear in $t$, we can solve $t$ in terms of $w,z,\gamma$. Therefore $\mathbf{C}(W) = \mathbf{C}(t, s, z, w)$ is generated by $w,z,\gamma$ over $\mathbf{C}$. Since $W$ has dimension $3$, this implies that $W$ is rational. 
\end{remark}

\begin{remark}\label{FieldDefinition} $\tilde{C}$ in Lemma \ref{Affine} is the elliptic curve defined over any field $K$ of characteristic $\not= 2, 3$ 
and the automorphism $\tilde{g}$ is defined over any field $K$ containing the primitive third root of unity $\omega$. The argument in this section shows that 
$V/\langle g \rangle$ is birationally equivalent to $Q$ and it is rational, also over any field $K$ containing $\omega$ and of characteristic $\not= 2$, $3$.
\end{remark}

\begin{remark}\label{Higher} Consider the quotient variety of dimension $n \ge 2$:
$$V_n := E_{\omega}^n/ \langle -\omega I_n \rangle\,\, .$$
Then $V_n$ has isolated singular points of type $1/2(1, 1, \cdots , 1)$, $1/3(1, 1, \cdots , 1)$ and one isolated singular point $O$ of type $1/6(1, 1, \cdots , 1)$. From this, it is easy to see that ${\mathcal O}_{V_n} (6K_{V_n}) \simeq {\mathcal O}_{V_n}$, $h^1({\mathcal O}_{V_n}) = 0$ and 
the singular point $O$ is Kawamata log terminal but not canonical when $2 \le n \le 5$, canonical but not terminal when $n = 6$ and terminal (but not smooth) when $n \ge 7$. Also all other singular points are terminal when $n \ge 4$. (See \cite{KMM87}, \cite{KM98} for terminologies and basic notions of minimal model theory.) So, when $n \ge 7$ (resp. when $n= 6$), $V_n$ (resp. the blow up of $V_6$ at $O$) are minimal but singular Calabi-Yau varieties, in the sense of minimal model theory, and therefore, they are not even uniruled. On the other hand, the Kodaira dimension of the resolution of $V_n$ ($2 \le n \le 5$ ) is $-\infty$. 
So, $V_2$ is rational by Castelnouvo's criterion (see eg. \cite[Page 252]{BHPV04}). Our result shows that $V_3$ is rational. It would be interesting to see if $V_4$, $V_5$ are rational, unirational or not. 
\end{remark}

\section{Topological entropy, Dynamical degrees and relative dynamical degrees}

In this section we recall some known facts about topological entropy, dynamical degrees and relative dynamical degrees which will be used later. Our main references are \cite{dinh-sibony10}, \cite{dinh-sibony3}, \cite{dinh-nguyen} and \cite{dinh-nguyen-truong}.

\subsection{Topological entropy}

 Let $f:X\rightarrow  X$ be a surjective holomorphic map. Then $f$ is in particular continuous, and by the classical ergodic theory we can define the topological entropy of $f$. Its topological entropy measures how the map separates the orbits of distinct points, and hence is an indication of the complexity of $f$. The larger topological entropy, the more complexity. Its definition is given by: 
\begin{eqnarray*}
h_{top}(f)=\sup _{\epsilon >0} (\limsup _{n\rightarrow\infty}\log \max \{\# F: F\mbox{ is an } (n,\epsilon ) ~set\}).
\end{eqnarray*}  
Here a set $F$ is called an $(n,\epsilon )$ set if any two distinct points $x$ and $y$ in $F$ are $(n,\epsilon )$ separated, that is the distance between the two $n$-orbits $(x,f(x),f^2(x),\ldots ,f^n(x))$ and $(y,f(y),f^2(y),\ldots ,f^n(y))$ is at least $\epsilon$. Since $X$ is compact, any $(n,\epsilon )$ set is finite. 

The topological entropy $h_{top}(f)$ of a meromorphic map can be defined similarly (by the works of Gromov, Friedland, Guedj, Dinh and Sibony) but more complicated than that of a holomorphic map, see e.g. Guedj \cite{guedj2}. 

\subsection{Dynamical degrees}   

A meromorphic map $f:X\cdots \to Y$ between two complex manifolds is a holomorphic map $f|_U:U\rightarrow Y$ from a Zariski open dense set $U$ of $X$ into $Y$ so that the closure of the graph of $f|_U$ is an analytic subvariety of $X\times Y$.  We say that $f$ is dominant if $f|_U(U)$ is dense in $Y$. An example of rational maps is a rational function $f$ on $\mathbb{C}$ of the form $f(z)=P(z)/Q(z)$ where $P$ and $Q$ are polynomials in the variable $z\in \mathbb{C}$ which are relatively prime. While $f$ is not a continuous selfmap on $\mathbb{C}$, it becomes a holomorphic selfmap when extended to the complex projective line $\mathbb{P}^1$. $f$  has two dynamical degrees: $\lambda _0(f)=1$ and $\lambda _1(f)=\max\{{deg(P)},{deg(Q)}\}$ the topological degree of $f$. The importance of these dynamical degrees was shown already since the works of Gaston Julia, Pierre  Fatou and Lucjan B\"ottcher more than 100 years ago (see Milnor's book \cite{milnor}). For example, a classical result says that if $\lambda _1(f)>1$ then the periodic points of $f$ are equidistributed with respect to its equilibrium measure (see Brolin \cite{brolin}, Freie, Lopes and Mane \cite{freie-lopes-mane}, Lyubich \cite{lyubich} and Tortrat \cite{tortrat}).

Since the fundamental results of Gromov and Yomdin, dynamical degrees have proved important in dynamics of holomorphic and meromorphic selfmaps in higher dimensions as well. In many results and conjectures in complex dynamics in higher dimensions, dynamical degrees play a central role.

In what follows, let $X$ be a compact K\"ahler manifold of dimension $k$, and $f:X\cdots \to X$ a dominant meromorphic map. (In this section, $X$ and $Y$ have nothing to do with $X$ and $Y$ in our main theorem).

The pullback maps $f^*:H^{p,p}(X)\rightarrow H^{p,p}(X)$ are well-defined. The idea is that if $\theta$ is a smooth form then we can define $f^*(\theta )$ as a current, that is the extension by zero of the $(p, p)$-form $(f\vert U)^*(\theta)$. Here $U$ is any Zariski dense open set where $f$ is holomorphic and $(f\vert U)^*(\theta)$ is the usual pullback by the holomorphic $f \vert U$. Note however that in general  we do not have the compatibility of pullback maps: $(f^n)^*$ may be different from $(f^*)^n$ on $H^{p,p}(X)$. Russakovskii and Shiffman \cite{russakovskii-shiffman} (the case $X=\mathbb{P}^k$) and Dinh and Sibony \cite{dinh-sibony1, dinh-sibony10} (the case of compact K\"ahler manifolds) defined the $p$-th dynamical degree $\lambda _p(f)$ as follows 
\begin{equation}
\lambda _p(f)=\lim _{n\rightarrow\infty}r_p(f^n)^{1/n},
\label{EquationDynamicalDegreeDefinition}\end{equation}
where $r_p(f^n)$ is the spectral radius of $(f^n)^*:H^{p,p}(X)\rightarrow H^{p,p}(X)$. The existence of the limit in (\ref{EquationDynamicalDegreeDefinition}) is a non-trivial fact. The main idea is to show that there is a constant $C>0$, being independent of the maps $f,g:X\rightarrow X$, such that $r_p(f\circ g )\leq Cr_p(f)r_p(g)$. Proving this needs good regularization of a positive closed $(p,p)$ current, and such a regularization was given for a compact K\"ahler manifold in \cite{dinh-sibony1, dinh-sibony10}. 

Here are some simple properties of dynamical degrees (\cite{dinh-sibony10}): 
$$\lambda _0(f)=1\,\, ,\,\, \lambda _k(f)=\, {\rm deg}\, f\,\, ,$$ 
where ${\rm deg}\, f$ is the topological degree of $f$, and the log-concavity 
$$\lambda _{p-1}(f)\lambda _{p+1}(f)\leq \lambda _p(f)^2\,\, .$$ 
In particular, $\lambda _p (f) \ge 1$ for $0 \le p \le k = \dim\, X$.
Given that the limit in (\ref{EquationDynamicalDegreeDefinition}) exists, it is easy to show that dynamical degrees are bimeromorphic invariants (\cite{dinh-sibony10}). 

For a surjective holomorphic map $f$, $\lambda _p(f)$ is simply $r_p(f)$. In this case, we have 
$$h_{top}(f)=\max _{0\leq p\leq k}\log \lambda _p(f)\,\, ,$$ 
where $h_{top}(f)$ is the topological entropy of $f$. Gromov \cite{gromov} proved the inequality $h_{top}(f)\leq \max _{0\leq p\leq k}\log \lambda _p(f)$, and Yomdin \cite{yomdin} proved the reverse inequality:
$$h_{top}(f) \ge r(f^* \vert H^*(X, {\mathbf R}))\,\, .$$
Here $r(f^* \vert H^*(X, {\mathbf R}))$ is the spectral radius of $f^*$ on the total cohomology ring $H^*(X, {\mathbf R})$. The proof of Yomdin's inequality is quite involved and it holds more generally for $C^{\infty}$ maps.  For a meromorphic map, Gromov's inequality still holds (\cite{dinh-sibony1, dinh-sibony10}), but Yomdin's inequality does not hold in general (\cite{guedj2}).

\begin{remark}\label{SingularSetting} 

(1) We can define dynamical degrees in a more general setting. Let $X$ be a compact complex variety, not necessarily smooth, which is bimeromorphically equivalent to a compact K\"ahler manifold $\widetilde{X}$ via a bimeromorphic map $\pi :\widetilde{X} {\cdots } \to X$ (such a variety $X$ is usually said to be of Fujiki's class {\it C}). If $f:X\cdots \to X$ is a dominant meromorphic map, we can define dynamical degrees of $f$ as follows. Let $$\widetilde{f}:=\pi ^{-1}\circ f\circ \pi:~\widetilde{X}\cdots \to \widetilde{X}$$ 
be the induced map. Then we define 
$$\lambda _p(f):=\lambda _p(\widetilde{f})\,\, .$$
This is well-defined by the bimeromorphic invariance of dynamical degrees for compact K\"ahler {\it manifolds}. These facts will be used in our proof of Theorem \ref{main} (2). 

(2) Using Chow's moving lemma, we can also define dynamical degrees algebraically for a rational selfmap over an arbitrary algebraic closed field. The case of characteristic zero is treated in \cite{truong2}. The general case and applications to dynamics over a non-Archimedean field will be given in an ongoing joint project of C. Favre and the second author. 
\end{remark}    

\subsection{Relative dynamical degrees}

If a meromorphic map preserves a meromorphic fibration, then there are some relations which must be satisfied by its dynamical degrees. Specifically, let 
$$f:X\cdots \to X\,\, ,\,\, g:Y\cdots \to Y\,\, ,\,\, 
\pi :X \cdots \to Y$$ 
be dominant meromorphic maps such that $\pi \circ f=g\circ \pi$, 
where $X$ and $Y$ are compact K\"ahler manifolds of dimensions $k$ and $l$ 
with $k\geq l$. Let $\omega _X$ be a K\"ahler form on $X$ and $\omega _Y$ a K\"ahler form on $Y$. 

In the case where $\pi$ is {\it holomorphic}, the relative dynamical degrees $\lambda _p(f|\pi )$ (here $0\leq p\leq k-l$) is defined by:
\begin{eqnarray*}
\lambda _p(f|\pi )=\lim _{n\rightarrow\infty}(\int _X(f^n)^*(\omega _X^p)\wedge \pi ^*(\omega _Y^{l})\wedge \omega _X^{k-l-p})^{1/n}.
\end{eqnarray*} 
Here  $(f^n)^*(\omega _X^p)$ is defined as a current and $\pi ^*(\omega _Y^{l})\wedge \omega _X^{k-p-l}$ is defined as the usual smooth form, and therefore the integration above makes sense.  
This definition is due to Dinh and Nguyen \cite{dinh-nguyen}. They also proved that $\lambda _p(f|\pi )$ satisfy the log-concavity property and that they are bimeromorphic invariants in the sense that 
$$\lambda _p(f|\pi ) = \lambda _p(\widetilde{f}|\widetilde{\pi} )$$ for 
two surjective holomorphic maps between compact k\"ahler manifolds 
$$\pi : X \rightarrow Y\,\, ,\,\, \widetilde{\pi} : \widetilde{X} \rightarrow \widetilde{Y}$$ 
being bimeromorphic in an obvious sense. Here $\widetilde{f}$, $\widetilde{g}$ are the meromorphic selfmaps of $\widetilde{X}$ and $\widetilde{Y}$ induced by 
$f$ and $g$. Note also that $\lambda _0(f|\pi ) =1$ 
and $\lambda _{k-l}(f|\pi )$ is the topological degree of $f\vert X_t : X_t \cdots\to X_{g(t)}$ for generic fibers $X_t$ ($t \in Y$). The log-concavity property then implies that $\lambda _p(f|\pi ) \ge 1$ for any {\it meaningful} $p$, i.e., for all integers $p$ such that $0 \le p \le k-l$. 

Using the bimeromorphic invariance, we can also define $\lambda _p(f|\pi )$ when $\pi$ is not necessarily holomorphic: First take any resolution 
$$\mu : \widetilde{X} \rightarrow X$$
of the indeterminacy of $\pi$ by a compact K\"ahler manifold $\widetilde{X}$. 
Then the maps 
$$\widetilde{f} := \mu^{-1} \circ f \circ \mu : \widetilde{X} \cdots \to \widetilde{X}\,\, ,\,\, g:Y\cdots \to Y\,\, ,\,\, \widetilde{\pi} := \mu \circ \pi : \widetilde{X} \rightarrow Y$$ 
are dominant meromorphic maps such that $\tilde{\pi} \circ \tilde{f} = g \circ \tilde{\pi}$ and $\tilde{\pi}$ is {\it holomorphic}. We define the relative dynamical degree by 
$$\lambda _p(f|\pi ) := \lambda _p(\widetilde{f} |\widetilde{\pi})\,\, .$$ 
Well-definedness follows from the bimeromorphic invariance. By definition, 
$\lambda _p(f|\pi )$ also satisfy the log-concavity property, and that $\lambda _0(f|\pi ) =1$, $\lambda _{k-l}(f|\pi )$ is the topological degree of $f\vert X_t$ as before and $\lambda _p(f|\pi ) \ge 1$ for any meaningful $p$. 

The following result was proved in \cite{dinh-nguyen-truong}. Some special cases were proved in \cite{dinh-nguyen} and  \cite{nakayama-zhang}.
\begin{theorem}\label{Dynamical} Let 
$$f:X\cdots \to X\,\, ,\,\, g:Y\cdots \to Y\,\, ,\,\, 
\pi :X \cdots \to Y$$ 
be dominant meromorphic maps such that $\pi \circ f=g\circ \pi$, 
where $X$ and $Y$ are compact K\"ahler manifolds of dimensions $k$ and $l$ 
with $k\geq l$. Then for all $0\leq p\leq k$:
\begin{equation}
\lambda _p(f)= \max_{\max\{0,p-k+l\}\leq j\leq \min\{p,l\}}\lambda _j(g)\lambda _{p-j}(f|\pi ). \label{EquationKahlerCase}\end{equation}
\end{theorem}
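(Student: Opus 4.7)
The plan is to reduce to the case of a holomorphic fibration $\pi$ and then establish the identity by matching upper and lower bounds on the mass integrals that define $\lambda_p(f)$, $\lambda_j(g)$ and $\lambda_{p-j}(f|\pi)$.

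\emph{Reduction.} First I would exploit the bimeromorphic invariance recalled in the previous subsection: by blowing up $X$ I may choose a K\"ahler model $\widetilde{X}$ together with a bimeromorphic morphism $\mu:\widetilde{X}\to X$ such that $\widetilde{\pi}:=\pi\circ\mu$ is holomorphic, replace $f$ by the lift $\widetilde{f}:=\mu^{-1}\circ f\circ\mu$, and note that all three dynamical quantities are preserved. After a further flattening blow-up I may assume $\pi$ is a smooth submersion over a Zariski dense open subset of $Y$, and after rescaling $\omega_X$ by a large constant (which does not affect dynamical degrees) I may assume that $\omega_X-\pi^*\omega_Y$ is still K\"ahler on $X$.

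\emph{Upper bound.} Next I would expand $\omega_X^p=\sum_j \binom{p}{j}(\pi^*\omega_Y)^j\wedge(\omega_X-\pi^*\omega_Y)^{p-j}$, and similarly for $\omega_X^{k-p}$, as sums of positive closed forms. Applying $(f^n)^*$ to the first expansion together with the equivariance $(f^n)^*\pi^*=\pi^*(g^n)^*$, wedging with the second expansion and integrating over $X$, all terms whose total $\pi^*$-degree exceeds $l$ vanish because $\dim Y=l$. By the Fubini theorem along $\pi$, the remaining cross-terms factor as an integral $\int_Y(g^n)^*\omega_Y^j\wedge\omega_Y^{l-j}$ against a relative mass of the shape $\int_X(f^n)^*\omega_X^{p-j}\wedge\pi^*\omega_Y^l\wedge\omega_X^{k-l-p+j}$. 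Taking $n$-th roots and using subexponentiality of the polynomial prefactors yields $\lambda_p(f)\le\max_j\lambda_j(g)\lambda_{p-j}(f|\pi)$, with the bidegree bookkeeping $j\le l$ and $p-j\le k-l$ forcing exactly the stated range $\max\{0,p-k+l\}\le j\le\min\{p,l\}$.

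\emph{Lower bound and main obstacle.} For the reverse inequality I would fix an admissible $j$ and test against a positive closed $(k-p,k-p)$-current on $X$ of the form $\pi^*\eta\wedge\theta$, where $\eta$ is an $(l-j,l-j)$-current on $Y$ extracted from a subsequential limit of the normalised iterates $(g^n)^*\omega_Y^{l-j}/r_{l-j}(g^n)$, and $\theta$ is a positive closed form on $X$ realising the relative degree $\lambda_{p-j}(f|\pi)$ along the generic fiber. Pairing with $(f^n)^*\omega_X^p$ and invoking Fubini would produce a lower bound of shape $c\,\lambda_j(g)^n\lambda_{p-j}(f|\pi)^n$ for this mass, so that $\lambda_p(f)\ge\lambda_j(g)\lambda_{p-j}(f|\pi)$ for every admissible $j$. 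The main obstacle is precisely this lower bound: since $\eta$ and $\theta$ are currents rather than smooth forms, their wedge product with $(f^n)^*\omega_X^p$ is delicate, and controlling its mass uniformly in $n$ requires the regularisation of positive closed currents on compact K\"ahler manifolds developed by Dinh--Sibony together with a careful analysis of how the indeterminacy loci of $f^n$ sit over $Y$; this is the technical heart of \cite{dinh-nguyen-truong}. Once this step is carried out, taking the maximum over $j$ matches the two inequalities and establishes \eqref{EquationKahlerCase}.
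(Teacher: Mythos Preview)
Your reduction step and the bookkeeping on the range of $j$ are fine, and the overall architecture (reduce to holomorphic $\pi$, then match an upper and a lower bound) is the right shape. However, the argument you give for the \emph{upper} bound does not go through, and this is where your sketch diverges from the paper's account of \cite{dinh-nguyen-truong}.

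The problematic line is ``together with the equivariance $(f^n)^*\pi^*=\pi^*(g^n)^*$, \ldots\ by the Fubini theorem along $\pi$, the remaining cross-terms factor''. For meromorphic $f^n$ and $g^n$ this fails at the level of currents: pullback by a meromorphic map is not multiplicative on wedge products, and $\pi^*$ of the current $(g^n)^*\omega_Y^j$ is not even defined in general. Concretely, $(f^n)^*\bigl[(\pi^*\omega_Y)^j\wedge\beta^{p-j}\bigr]$ is a single positive closed current that does \emph{not} split as $\pi^*\bigl((g^n)^*\omega_Y^j\bigr)\wedge(f^n)^*\beta^{p-j}$, so there is nothing for Fubini to act on. The failure of functoriality $(f\circ h)^*\neq h^*\circ f^*$ for meromorphic pullbacks is exactly why dynamical degrees are defined as limits rather than spectral radii, and it is the same obstruction here.

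The paper's sketch (following \cite{dinh-nguyen-truong}) replaces your binomial/Fubini step by a K\"unneth-type \emph{inequality} in cohomology,
\[
\{T\}\le A\sum_{j}\alpha_j(T)\,\{\pi^*\omega_Y^j\}\smile\{\omega_X^{p-j}\},\qquad \alpha_j(T)=\langle T,\pi^*\omega_Y^{l-j}\wedge\omega_X^{k-l-p+j}\rangle,
\]
valid for any positive closed $(p,p)$-current $T$ smooth on a Zariski open set and without mass on subvarieties. This is proved not by expanding $\omega_X^p$, but by writing $T=(\pi_2)_*\bigl(\pi_1^*T\wedge[\Delta_X]\bigr)$ on $X\times X$, observing $\Delta_X\subset(\pi\times\pi)^{-1}(\Delta_Y)$, and applying an \emph{extended} Dinh--Sibony semi-regularization that approximates a positive closed current on a submanifold $V\subset W$ by restrictions of smooth closed forms on $W$. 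Applying this inequality to $T=(f^n)^*\omega_X^p$ is what produces the upper bound; the diagonal/regularization machinery is the technical heart, and it sits on the \emph{upper}-bound side, not the lower-bound side where you placed the main obstacle. Your lower-bound idea of testing against limit currents is closer in spirit to the easier direction.
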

We can see (\ref{EquationKahlerCase}) easily in the special case $X=Y\times Z$, $f=(g,h)$ a product map, and $\pi :X\rightarrow Y$ is the projection to $Y$ (by using the Kunneth's formula for the cohomology groups of $X$). In the general case, the main tool is an analogous Kunneth's formula, which now is an inequality rather than an equality. More precisely, if $T$ is a positive closed $(p,p)$ current which is smooth on a Zariski open dense set and has no mass on proper subvarieties, then in cohomology:
\begin{equation}
\{T\}\leq A\sum _{\max \{0,p-k+l\}\leq j\leq \min \{l,p\}}\alpha _j(T)\{\pi ^*(\omega _Y^j)\}\smile \{\omega _X^{p-j}\}, \label{eqn_alpha}
\end{equation}
 where 
\begin{equation*} 
\alpha_j(T):=\big\langle T,\pi^*(\omega_Y^{l-j})\wedge
\omega_X^{k-l-p+j}\big\rangle.
\end{equation*}
In fact, we need a stronger version of (\ref{eqn_alpha}) which deals with positive closed currents instead of only cohomology classes.  

If $X=Y\times \mathbb{P}^{k-l}$, where $Y$ is projective, Dinh and Nguyen \cite{dinh-nguyen} proved (\ref{eqn_alpha}) and its positive closed current version using Kunneth's formula and the fact that $\mathbb{P}^{k-l}$ has a lot of automorphisms. They then prove (\ref{EquationKahlerCase}) for dynamical degrees in case $X$ and $Y$ are projective. An essential point used in their proof is that any dominant rational map $\pi :X\cdots \to Y$ is, up to a finite covering, the canonical projection $Y\times \mathbb{P}^{k-l} \rightarrow Y$. 

For the case of a dominant meromorphic map of  compact K\"ahler manifolds $\pi :X\cdots \to Y$, it is not known whether $\pi$ is, up to a finite covering, a canonical projection in the sense above. Instead, \cite{dinh-nguyen-truong} proceeded as follows. Let $T$ be a positive closed current on $X$, and let $\Delta _X$ be the diagonal. Then we have $T=(\pi _2)_*(\pi _1^*(T)\wedge [\Delta _X])$ (here $\pi _1$ and $\pi _2$ are the natural projections), and hence it is enough to prove a similar formula for $T=[\Delta _X]$ and $\pi$ is replaced by the product $\pi \times \pi :X\times X\cdots \to Y\times Y$. To this end, we observe that $\Delta _X$ is a subvariety of $(\pi \times \pi )^{-1}(\Delta _Y)$. Then we extend the semi-regularization of Dinh and Sibony to the form: if $V$ is a submanifold of $W$ and $T$ is a positive closed current on $V$, then $T$ can be semi-regularized by currents of the form $\iota ^*( \theta _n)$, where $\iota :V\subset W$ is the inclusion of $V$ in $W$ and  $\theta _n$ is  a positive closed smooth form on $W$. Dinh and Sibony's regularization corresponds to the case $V=W$.    

\begin{remark}\label{SingularRelativeSetting} 

(1) Because of the bimeromorphic invariance of relative dynamical degrees, we can define relative dynamical degrees and apply Theorem \ref{Dynamical} in a more general setting. Let $X$, $Y$ be compact complex varieties, not necessarily smooth, which are bimeromorphically equivalent to compact K\"ahler manifolds $\widetilde{X}$, $\widetilde{Y}$. Let  
$$\widetilde{f}:\widetilde{X}\cdots\to \widetilde{X}\,\, ,\,\, \widetilde{g}:\widetilde{Y}\cdots\to \widetilde{Y}\,\, ,\,\, \widetilde{\pi}:\widetilde{X} 
\cdots \to \widetilde{Y}$$ 
be the induced maps. We thus define 
$$\lambda _p(f|\pi ):=\lambda _p(\widetilde{f}|\widetilde{\pi})\,\, ,$$
which is well-defined.

(2) In case $\pi :X\cdots\to  Y$ is a bimeromorphic map, or more generally a generically finite meromorphic map, then Theorem \ref{Dynamical} implies that $\lambda _p(f)=\lambda _p(g)$ for every $p$ (note that this case was proved earlier in \cite{dinh-nguyen}). This is because in this case there is only one relative dynamical degree, that is, $\lambda _0(f|\pi )=1$. In particular, in the case where $\pi :X\rightarrow Y=X/G$ is the quotient by a finite group, we have $\lambda _p(f)=\lambda _p(g)$ for every $p$. 

(3) By the previous remark (2), if $\pi :X\rightarrow Y$ is a generically 
finite holomorphic map, $X$, $Y$ are compact K\"ahler manifolds and $f$, $g$ are surjective holomorphic maps with $\pi \circ f = g \circ \pi$, then $h_{top}(f)=h_{top}(g)$. In particular, we will apply for the case $\pi :X\rightarrow Y=X/G$ is the quotient by a finite group and $f$, $g$ are automorphisms, that is, the case where the automorphism $f$ normalizes $G$ and $g$ is the descent of $f$. A priori, $Y$ may not be smooth, so Gromov-Yomdin's theorem may not apply. But if there is a compact K\"ahler manifold $\widetilde{Y}$ birationally equivalent to $Y$ such that the induced map $\widetilde{g}$ is biholomorphic, then we have $h_{top}(\widetilde{g})=h_{top}(f)$. Such a $\widetilde{Y}$ exists, by the equivariant resolution of singularities due to Hironaka. 
\end{remark}

\section{Primitive automorphisms of $X$ and $Y$}

In this section, we prove Theorem \ref{main} (2). We obtain the following strong restriction of the dynamical degrees of imprimitive  rational selfmaps of projective manifolds. 

\begin{theorem}\label{Restriction}
(1) Let $M$ be a smooth projective manifold and $f$ a dominant meromorphic selfmap of $M$. If $\lambda_1(f)>\lambda_2(f)$ then $f$ is primitive.

(2) Let $M$ be a compact K\"ahler manifold of dimension $3$ and $f:M\cdots\to M$ a bimeromorphic map. Assume that $f$ is imprimitive. Then $\lambda_1(f) = \lambda_2(f)$. In other words, if $\lambda_1(f) \not= \lambda_2(f)$, then $f$ has to be primitive. 
\end{theorem}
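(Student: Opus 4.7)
The plan is to apply the product formula from Theorem~\ref{Dynamical} to the semiconjugacy witnessing imprimitivity and unpack the max-formula for $\lambda_1$ and $\lambda_2$ in terms of the base dynamical degrees $\lambda_j(g)$ and the relative dynamical degrees $\lambda_j(f|\varphi)$. The rest is case analysis and bookkeeping, where the key auxiliary tools are the log-concavity of relative dynamical degrees, the fact that $\lambda_{k-l}(f|\varphi)$ equals the topological degree of $f$ on a generic fiber, and the observation that for a bimeromorphic $f$ the induced base map $g$ is forced to be bimeromorphic as well (by comparing topological degrees on both sides of $\varphi\circ f=g\circ\varphi$).

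For part (1), I would argue by contrapositive. Assume $f$ imprimitive and let $\varphi:M\cdots\to B$, $g:B\cdots\to B$ with $0<l:=\dim B<k:=\dim M$ be a witness. Theorem~\ref{Dynamical} at $p=1$ gives
$$\lambda_1(f)=\max\{\lambda_1(g),\ \lambda_1(f|\varphi)\},$$
so it suffices to bound both summands by $\lambda_2(f)$. The term $j=1$ in the product formula for $p=2$ is always admissible (because $0<l<k$ makes the indexing interval nontrivial), hence $\lambda_2(f)\ge \lambda_1(g)\lambda_1(f|\varphi)\ge \lambda_1(g)$. To bound $\lambda_1(f|\varphi)$ I would split on the relative dimension $k-l$: if $k-l\ge 2$ the term $j=0$ is admissible, and log-concavity $\lambda_0(f|\varphi)\lambda_2(f|\varphi)\ge \lambda_1(f|\varphi)^2$ together with $\lambda_1(f|\varphi)\ge 1$ yields $\lambda_2(f)\ge \lambda_2(f|\varphi)\ge\lambda_1(f|\varphi)^2\ge\lambda_1(f|\varphi)$; if $k-l=1$ the fibers are curves, so $\lambda_1(f|\varphi)$ is the fiberwise topological degree and the already-established bound $\lambda_2(f)\ge\lambda_1(g)\lambda_1(f|\varphi)\ge \lambda_1(f|\varphi)$ suffices.

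For part (2), I would specialize to $k=3$ and split into $l=1$ and $l=2$. Since $f$ is bimeromorphic, so is $g$, and the top relative degree $\lambda_{k-l}(f|\varphi)$ equals $1$. In the case $l=1$, $g$ is bimeromorphic on a curve, hence biholomorphic with $\lambda_1(g)=1$, so the Theorem~\ref{Dynamical} formulas force both $\lambda_1(f)$ and $\lambda_2(f)$ to collapse to $\lambda_1(f|\varphi)$ (using $\lambda_2(f|\varphi)=1$ on the two-dimensional fibers). In the case $l=2$, $g$ is bimeromorphic on a surface, so $\lambda_2(g)=1$; the fibers are now curves so $\lambda_1(f|\varphi)=1$, and both $\lambda_1(f)$ and $\lambda_2(f)$ reduce to $\lambda_1(g)$. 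The main obstacle is not conceptual but bookkeeping: in part (1) one must carefully treat the boundary case $l=k-1$, where the term $j=0$ in the $p=2$ product formula is not admissible and must be replaced either by the log-concavity argument or by the fiberwise topological degree interpretation, depending on the context.
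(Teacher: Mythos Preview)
Your approach matches the paper's proof in spirit and structure. Part~(2) is essentially identical: the paper derives $\lambda_1(g)=\lambda_2(f|\pi)=1$ (case $l=1$) and $\lambda_2(g)=\lambda_1(f|\pi)=1$ (case $l=2$) by applying Theorem~\ref{Dynamical} at $p=3$ and using that all the relevant degrees are $\ge 1$, rather than by your direct ``bimeromorphic on a curve'' and ``fiberwise topological degree'' arguments, but the conclusions and the rest of the computation are the same.

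In part~(1), however, you have reversed the log-concavity inequality: log-concavity of the relative dynamical degrees gives $\lambda_0(f|\varphi)\lambda_2(f|\varphi)\le \lambda_1(f|\varphi)^2$, not $\ge$, so the step $\lambda_2(f|\varphi)\ge\lambda_1(f|\varphi)^2$ in your $k-l\ge 2$ case is invalid. Fortunately the case split is unnecessary: your own argument for $k-l=1$, namely $\lambda_2(f)\ge \lambda_1(g)\lambda_1(f|\varphi)\ge \lambda_1(f|\varphi)$ using $\lambda_1(g)\ge 1$, works regardless of the relative dimension. This is exactly how the paper proceeds: it simply observes
\[
\lambda_2(f)\ \ge\ \lambda_1(g)\,\lambda_1(f|\varphi)\ \ge\ \max\{\lambda_1(g),\,\lambda_1(f|\varphi)\}\ =\ \lambda_1(f)
\]
in one line, using that both $\lambda_1(g)\ge 1$ and $\lambda_1(f|\varphi)\ge 1$. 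The ``boundary obstacle'' you flag at the end therefore does not arise.
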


\begin{remark}\label{known} This kind of criterion should be known to experts in complex dynamics. However, an explicit  statement of the criterion with a complete proof (which relies on recent results in complex dynamics, see Section 3) seems not be available in the literature. This theorem is crucial in our main result Theorem \ref{main} (2). Part (1) of Theorem \ref{Restriction} is in particular useful when considering maps in dimensions at least $4$, please see Remark \ref{final} for more details. Therefore, we present here such a statement and proof together with a brief, but hopefully comprehensible, account of needed results (see Section 3).   
\end{remark} 

\begin{proof} 

We first prove (1). Assume that $f$ is imprimitive. Then there are compact K\"ahler manifold $B$, a dominant meromorphic maps $\pi : M \cdots \to B$, $g : B \cdots \to B$ such that $\pi \circ f=g\circ \pi$ and $0 < \dim\, B < \dim\, M$. 
Note that  
$$\lambda_0(f) = \lambda_0 (g) = \lambda_0(f|\pi ) = 1\,\, ,$$ 
by definition. Then by Theorem \ref{Dynamical}, we have 
$$\lambda_1(f) = \max \{\lambda_1(g),  \lambda_1(f|\pi ) \}\,\, ,\,\, 
\lambda_2(f)=\max \{\lambda_2(f|\pi ), \lambda_1(g)\lambda_1(f|\pi ), \lambda_2(g) \}\,\, .$$ 
{\it Here the maximum is taken among the meaningful ones.} 
By $0 < \dim\, B < \dim\, M$, both terms $\lambda _1(g)$ and $\lambda _1(f|\pi )$ are meaningful, and hence they are greater than or equal to $1$ 
(log-concavity). Therefore 
$$\lambda_2(f) \geq \lambda_1(g)\lambda_1(f|\pi ) \geq \max \{\lambda_1(g), \lambda_1(f|\pi )\}=\lambda_1(f)\,\, .$$ Hence if $\lambda _1(f)>\lambda _2(f)$ then $f$ is primitive. 
 
Now we prove (2). Assume that $f$ is imprimitive. Then there are a compact K\"ahler manifold $B$, dominant meromorphic maps $\pi : M \cdots \to B$, $g : B \cdots \to B$ such that $\pi \circ f=g\circ \pi$ and $0 < \dim\, B < 3 =\dim\, M$. Then as in (1), we have 
$$\lambda_1(f) = \max \{\lambda_1(g),\lambda_1(f|\pi ) \}\,\, ,\,\, \lambda_2(f)=\max \{\lambda_2(f|\pi ),\lambda_1(g)\lambda_1(f|\pi ),\lambda_2(g)\}\,\, .$$
{\it Here again the maximum is taken among the meaningful terms.} 
Moreover, since $f$, and hence $g$, are bimeromorphic, it follows that 
$$\lambda_3(f) = \lambda_{\dim\, B} (g) = 1\,\, .$$

First consider the case where $\dim\, B = 1$. Then, by Theorem \ref{Dynamical}, we also have 
$$1 = \lambda_3(f) = \lambda_1(g)\lambda_2(f|\pi )\,\, .$$
Hence 
$$\lambda_1(g) = \lambda_2(f|\pi ) = 1\,\, .$$
Therefore
$$\lambda_1(f) = \max \{\lambda_1(g),\lambda_1(f|\pi ) \} = \max \{\lambda_1(f|\pi), 1\}\,\, ,$$
$$\lambda_2(f)=\max \{\lambda_2(f|\pi ),\lambda_1(g)\lambda_1(f|\pi ) \} = \max \{\lambda_1(f|\pi), 1\}\,\, .$$
Hence $\lambda_1(f) = \lambda_2(f)$. 

Next consider the case where $\dim\, B = 2$. Then, by Theorem \ref{Dynamical}, we also have 
$$1 = \lambda_3(f) = \lambda_2(g)\lambda_1(f|\pi )\,\, .$$
Hence 
$$\lambda_2(g) = \lambda_1(f|\pi ) = 1\,\, .$$
Therefore
$$\lambda_1(f) = \max \{\lambda_1(g),\lambda_1(f|\pi ) \} = \max \{\lambda_1(g), 1\}\,\, ,$$
$$\lambda_2(f)=\max \{\lambda_1(g)\lambda_1(f|\pi ), \lambda_2(g)\} = \max \{\lambda_1(g), 1\}\,\, .$$
Hence $\lambda_1(f) = \lambda_2(f)$ as well. 
\end{proof}

Now we are ready to prove our main theorem (2). 

Set $M := E_{\omega} \times E_{\omega} \times E_{\omega}$. Then we have a natural embedding of groups 
$${\rm GL}\,(3, {\mathbf Z}[\omega]) \subset {\rm Aut}\, (M)\,\, .$$ 
Moreover, since $\pm {\rm diag}\, (\omega, \omega, \omega)$ is in the center of ${\rm GL}(3, {\mathbf Z}[\omega])$, it follows that any $f \in {\rm GL}(3, {\mathbf Z}[\omega])$ naturally descends to the {\it biregular} automorphisms of $X$ and $Y$. The fact that they are biregular follows from the costruction 
and the universal property of blow-up. We denote them by $f_X$ and $f_Y$ respectively. 

\begin{lemma}\label{Tori} Let $a$ be any positive integer. Consider the automorphism $f := f_a$ of $M$ given by the matrix
$$P = P_a = \left(\begin{array}{rrr}
0 & 1 & 0\\
0 & 0  & 1\\
-1 & 3a^2 & 0\\
\end{array} \right)\,\, .$$
Then $d_2(f) > d_1(f) > 1$. Moreover $d_1(f)$ is not a Salem number.
\end{lemma}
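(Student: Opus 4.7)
The plan is to read off the dynamical degrees of $f$ directly from the eigenvalues of $P_a$ acting on $V := \C^3$, via the standard formula for dynamical degrees of a torus endomorphism.

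First I would compute the characteristic polynomial of $P_a$ by expanding along the first column, obtaining
$$\chi(\lambda) = \det(\lambda I - P_a) = \lambda^3 - 3 a^2 \lambda + 1.$$
By the rational root test, $\chi(\pm 1) = 2 \mp 3 a^2 \ne 0$ for $a \geq 1$, so $\chi$ is irreducible over $\Q$; its discriminant equals $27(4 a^6 - 1) > 0$, so $\chi$ has three distinct real roots $\mu_1, \mu_2, \mu_3$. From the signs of $\chi$ at $-\infty, 0, 1, +\infty$ I can label them so that $\mu_1 \in (-\infty, 0)$, $\mu_3 \in (0, 1)$ and $\mu_2 \in (1, +\infty)$. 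Vieta's relations $\mu_1 + \mu_2 + \mu_3 = 0$ and $\mu_1 \mu_2 \mu_3 = -1$ then give $|\mu_1| = \mu_2 + \mu_3 > \mu_2 > 1 > \mu_3 > 0$.

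Next, since $M = V/\Lambda$ with $\Lambda = \Z[\omega]^3$ and $f$ is induced by the $\C$-linear map $P_a$, the Hodge decomposition $H^{p,q}(M) = \wedge^p V^* \otimes \wedge^q \overline{V}^*$ implies that the spectral radius of $f^*$ on $H^{p,q}(M)$ equals the product of the $p$ largest $|\mu_i|$'s with the $q$ largest $|\mu_i|$'s. A direct comparison, using $|\mu_1| > \mu_2 > 1 > \mu_3$, shows that among the pieces of $H^{2k}$ the bidegree $(k,k)$-piece dominates, so
$$d_1(f) = |\mu_1|^2, \qquad d_2(f) = (|\mu_1|\mu_2)^2, \qquad d_3(f) = (|\mu_1|\mu_2\mu_3)^2 = 1.$$
Hence $d_1(f) > 1$ and $d_2(f) = \mu_2^2 \cdot d_1(f) > d_1(f)$, proving the first assertion.

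For the Salem claim I would determine the minimal polynomial of $d_1(f) = \mu_1^2$. Since $\Q(\mu_1)/\Q$ has prime degree $3$ and $\mu_1^2 \notin \Q$, we have $\Q(\mu_1^2) = \Q(\mu_1)$, so $\mu_1^2$ has a cubic minimal polynomial. Computing the elementary symmetric functions of $\mu_1^2, \mu_2^2, \mu_3^2$ from Vieta gives this minimal polynomial as
$$m(y) = y^3 - 6 a^2 y^2 + 9 a^4 y - 1,$$
whose three roots are $\mu_1^2 > \mu_2^2 > \mu_3^2 > 0$. Since $\mu_2^2 > 1$ is a Galois conjugate of $d_1(f)$ different from $d_1(f)$ with absolute value strictly greater than $1$, $d_1(f)$ cannot be a Salem number (all non-trivial conjugates of a Salem number lie in the closed unit disc; in particular the minimal polynomial of a Salem number has even degree at least $4$, whereas $d_1(f)$ has degree $3$). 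I do not anticipate any real obstacle; the most delicate step is simply establishing the strict size ordering $|\mu_1| > \mu_2 > 1 > \mu_3 > 0$, which forces the $(k,k)$-piece of cohomology to dominate and falls out of Vieta.
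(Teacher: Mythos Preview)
Your proof is correct and follows essentially the same route as the paper's: compute the characteristic polynomial $x^3-3a^2x+1$, locate its three real roots and order their absolute values, then read off $\lambda_1(f)=\mu_1^2$ and $\lambda_2(f)=(\mu_1\mu_2)^2$ from the exterior algebra structure on the cohomology of the torus; for the Salem claim, both you and the paper use that the squares $\mu_i^2$ are Galois conjugate and that a second conjugate $\mu_2^2>1$ lies outside the unit circle. Your extras---the discriminant computation, the explicit minimal polynomial $y^3-6a^2y^2+9a^4y-1$, and the degree parity remark for Salem numbers---are correct embellishments but not a different method.
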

Note here that a {\it Salem number} is a real algebraic integer $a > 1$ with Galois conjugates $a$, $1/a$ such that all other Galois conjugates are 
on the unit circle $S^1 \subset {\mathbf C}$. 
 
\begin{proof} Note that $P \in {\rm SL}(3, {\mathbf Z})$ so that $P$ gives an automorphism $f$ of $M$. Thus $\lambda_p(f)$ is the spectral radius 
of $f^* \vert H^{p, p}(M)$. The characteristic polynomial $\Phi(t)$ of the matrix $P$ is
$$\Phi(x) = x^3 - 3a^2x +1\,\, .$$
Observe $\Phi'(x) = 3(x-a)(x+a)$, and
\begin{eqnarray*}
\Phi (-a)=2a^2+1>0,~\Phi (0)=1>0,~\Phi (1)=2-3a^2<0,~\Phi (a)=-2a^2+1<0\,\, .
\end{eqnarray*}
Hence $\Phi(x)$ has three real roots $\alpha$ $\beta$, $\gamma$ such that 
$$\alpha <-a < 0< \beta <1 <a< \gamma\,\, .$$
We have also 
$$\alpha + \beta + \gamma = 0$$
by the shape of $\Phi(x)$. Hence  
$$0 < \vert \beta \vert < 1 < \vert \gamma \vert < \vert \alpha \vert\,\, .$$
Observe that the characteristic polynomial of $f^* \vert H^1(M, {\mathbf Z})$ is $\Phi(x)^2$. Hence $\alpha$, $\beta$, $\gamma$ are the eigenvalues of $f^* \vert H^1(M, {\mathbf Z})$ each of which is of multiplicity $2$. 
Since 
$$H^{2k}(M, {\mathbf Z}) = \wedge^{2k}H^{1}(M, {\mathbf Z})\,\, ,$$ 
it now follows that $\lambda_1(f) = \alpha^2$ and $\lambda_2(f) = \alpha^2\gamma^2$, therefore $\lambda_2(f) > \lambda_1(f) >1$. 

Since $f$ is an automorphism in dimension $3$ and $\lambda_1(f)\not= \lambda_2(f)$, by the results in \cite{oguiso-truong} we have that $\lambda_1(f)$ is not a Salem number. We can also see this directly as follows. Note that $\Phi(x)$ is irreducible over ${\mathbf Q}$, or equivalently, over ${\mathbf Z}$. Indeed, otherwise, $\Phi(x)$ would have one of $\pm 1$ as its root, a contradiction. Hence the three roots $\alpha$, $\beta$ and $\gamma$ are Galois conjugate over ${\mathbf Q}$. Thus, so are $\alpha^2$, $\beta^2$, $\gamma^2$. Since $\alpha^2 > \gamma^2 > 1$, it follows that $\lambda_1(f) = \alpha^2$ is not 
a Salem number. 
\end{proof}

The next proposition completes the proof of Theorem \ref{main} (2):

\begin{proposition}\label{Fibration} The induced automorphisms $f_X$ and $f_Y$ are primitive and of positive entropy for $f := f_a$ in Lemma \ref{Tori}. Moreover, their first dynamical degrees are not Salem numbers. 
\end{proposition}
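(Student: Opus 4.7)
The plan is to transfer everything from the torus $M = E_\omega \times E_\omega \times E_\omega$, where Lemma \ref{Tori} already gives us full information, down to the resolutions $X$ and $Y$. The descent of $f = f_a$ along the two quotient maps $M \to M/\langle \mathrm{diag}(\omega,\omega,\omega)\rangle$ and $M \to M/\langle \mathrm{diag}(-\omega,-\omega,-\omega)\rangle$ is well defined because $P_a$ commutes with the scalar matrices in the center, and it lifts biregularly through the canonical resolutions to $f_X$ and $f_Y$ by the universal property of blowing up at the isolated fixed quotient singularities (together with the fact that $f$ permutes the singular set accordingly).

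The first step I would carry out is to identify all the dynamical degrees on the nose. By Remark \ref{SingularRelativeSetting}(2), for the finite quotient map $M \to M/G$ (with $G$ cyclic of order $3$ or $6$) the dynamical degrees of $f$ and of its descent coincide. Then, since each canonical resolution is bimeromorphic to the corresponding singular quotient, the bimeromorphic invariance of dynamical degrees in Remark \ref{SingularSetting}(1) gives
\[
\lambda_p(f_X) \;=\; \lambda_p(f) \;=\; \lambda_p(f_Y) \qquad (p=0,1,2,3).
\]
In particular, Lemma \ref{Tori} yields $\lambda_1(f_X) = \lambda_1(f_Y) = \alpha^2$ and $\lambda_2(f_X) = \lambda_2(f_Y) = \alpha^2 \gamma^2$, with $\alpha^2 \neq \alpha^2\gamma^2$ both strictly greater than $1$, and $\alpha^2$ not a Salem number.

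Next, I would apply Theorem \ref{Restriction}(2) to $f_X$ and $f_Y$. Both are biregular automorphisms of smooth projective (hence compact K\"ahler) threefolds, so they are in particular bimeromorphic maps of three-dimensional compact K\"ahler manifolds. Since $\lambda_1 \neq \lambda_2$ for each of them by the previous step, the contrapositive of Theorem \ref{Restriction}(2) gives that $f_X$ and $f_Y$ are primitive. The ``not Salem'' assertion is then immediate from the equalities $\lambda_1(f_X) = \lambda_1(f_Y) = \lambda_1(f)$ combined with Lemma \ref{Tori}.

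It remains to check positive entropy. Since $X$ and $Y$ are smooth and $f_X$, $f_Y$ are genuine (biregular) automorphisms, the theorem of Gromov--Yomdin cited in Remark \ref{PrimitiveEntropy}(2) applies, giving
\[
h_{\mathrm{top}}(f_X) \;=\; \log \max_{0 \le k \le 3} \lambda_k(f_X) \;\ge\; \log \lambda_2(f_X) \;=\; \log (\alpha^2 \gamma^2) \;>\; 0,
\]
and similarly for $f_Y$. No step of the argument looks like a serious obstacle: the only mildly delicate points are (i) checking that the lifts $f_X$, $f_Y$ are indeed biregular at the exceptional divisors of the canonical resolutions, which follows from the universal property of the blow-ups at the points fixed set-theoretically by the covering group action, and (ii) correctly invoking the singular/bimeromorphic versions of the invariance of dynamical degrees from Remarks \ref{SingularSetting} and \ref{SingularRelativeSetting}.
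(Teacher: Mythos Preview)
Your proposal is correct and follows essentially the same route as the paper's own proof: transfer the dynamical degrees from $M$ to $f_X$ and $f_Y$ via Remark \ref{SingularRelativeSetting}, invoke Lemma \ref{Tori} for $\lambda_2 > \lambda_1 > 1$ and the non-Salem property, and conclude primitivity from Theorem \ref{Restriction}(2) and positive entropy from $\lambda_1 > 1$. Your write-up is somewhat more explicit (spelling out the descent, the Gromov--Yomdin step, and the two stages of the invariance argument), but the underlying argument is identical.
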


\begin{proof} By Remark \ref{SingularRelativeSetting}, we have 
$$\lambda_p(f_X) = \lambda_p(f) = d_p(f_Y)\,\, .$$ 
By Lemma \ref{Tori}, we have $\lambda_2(f) > \lambda_1(f)$. Hence 
$$\lambda_2(f_X) > \lambda_1(f_X) > 1\,\, ,\,\, \lambda_2(f_Y) > \lambda_1(f_Y) >1\,\, .$$ 
Thus $f_X$ and $f_Y$ are primitive by Theorem \ref{Restriction}. they are of positive entropy by $\lambda_1(f_X) >1$ and $\lambda_1(f_Y) >1$. 
By Lemma \ref{Tori}, $\lambda_1(f_X) = \lambda_1(f_Y) = \lambda_1(f)$ is not a Salem number 
as well. 
\end{proof}

\begin{remark}\label{Fin} For the same reason, $f$ in Lemma \ref{Tori} is also a primitive automorphism of the $3$-torus $M$ of positive entropy.
\end{remark}

\begin{remark}\label{final} 

(1) We give only one explicit example of smooth rational threefold and Calabi-Yau threefold in the strict sense, admitting primitive automorphisms of positive entropy. However, we expect that there should be many such smooth rational threefolds and  Calabi-Yau threefolds. It will be interesting to find more examples. 

(2) It will be also very interesting to find rational manifolds and Calabi-Yau manifolds (in both weak and strict senses), {\it of dimension} $\ge 4$, admitting primitive automorphisms of positive entropy. In finding such examples, our theorem \ref{Restriction} (1) might be useful. 
\end{remark}

\end{document}